\theoremstyle{plain}
\newtheorem{coro}{Corollary}
\newtheorem{theo}[coro]{Theorem}
\newtheorem{lemm}[coro]{Lemma}
\newcommand{\R}{\mathbb{R}}
\newcommand{\N}{\mathbb{N}}
\newcommand{\Q}{\mathbb{Q}}
\newcommand{\kZ}{\mathcal{Z}}
\newcommand{\kT}{\mathcal{T}}
\newcommand{\rV}{\mathrm{V}}
\newcommand{\rE}{\mathrm{E}}
\newcommand{\rF}{\mathrm{F}}
\renewcommand{\leq}{\leqslant}
\renewcommand{\geq}{\geqslant}
\newcommand{\llangle}{\langle\hspace{-0.07cm}\langle}
\newcommand{\rrangle}{\rangle\hspace{-0.07cm}\rangle}
\newcommand{\pnuc}{\ensuremath{\mathbb{P}^{\nu,c}}}
\newcommand{\enuc}{\ensuremath{\mathbb{E}^{\nu,c}}}
\newcommand{\vnuc}{\ensuremath{\mathrm{Var}^{\nu,c}}}
\begin{document}
\normalem

\title[Ising model with external magnetic field on random planar maps]{ Ising model with external magnetic field on random planar maps: Critical exponents}
\author{Nicolas Tokka}
\address{MODAL'X, Université Paris-Nanterre, Nanterre, and IRIF, Université Paris-Cité, Paris, France.}
\email{n.tokka@parisnanterre.fr}

\begin{abstract}
We study the Ising model with an external magnetic field on random tetravalent planar maps and investigate its critical behavior. Explicit expressions for spontaneous magnetization and the susceptibility are computed and the critical exponents $\alpha=-1$ (third order phase transition), $\beta=\frac{1}{2}$ (spontaneous magnetization), $\gamma=2$ (susceptibility at zero external magnetic field)  and $\delta=5$ (magnetization at critical temperature) are derived. To do so, we study the asymptotic behavior of the partition function of the model in the case of a weak external magnetic field using analytic combinatorics.  
\end{abstract}

\maketitle

%-----------------------------------------------------------------------------------------------------------------------
%\tableofcontents

\section{Introduction}

	The Ising model was introduced in the 1920s by Lenz and Ising \cite{Ising} to study magnetism. It was first investigated in dimension one, for which no interesting behavior appeared. A few years latter, Onsager \cite{Onsager} proved that the model exhibits a phase transition in dimension 2. Since then, the model has been extensively studied on regular lattices by probabilists and physicists (see the survey by Duminil-Copin \cite{DuminilCopin}).

	More recently, the Ising model and its probabilistic aspects have been studied on planar maps, which are graphs embedded into surfaces. The latter have been used by physicists as generic models of random geometries in dimension 2 to study quantum gravity. In this line of work the Ising model and the random lattices are coupled, and two types of questions arise. The first is about the behavior of the Ising model itself: does it exhibit a phase transition at finite temperature? The second question is about the geometry of the underlying map and how it is impacted by the Ising model, especially at criticality.

The Ising model on planar maps without an external magnetic field was initially explored by Kazakov in \cite{Kazakov86} through the use of matrix models. Subsequent work on its combinatorial aspects was conducted by Bouttier, Di Francesco, and Guitter in \cite{BDG_07}, and by Bernardi and Bousquet-Mélou in \cite{Bernardi_BBM}. More recently, Albenque, Chen, Ménard, Turunen, and Schaeffer have examined the probabilistic aspects of random maps coupled with the Ising model, as discussed in \cite{ChenTurunen22, ChenTurunen20, IsingAMS, AlbenqueMenard, Turunen}.

The study of the Ising model on planar maps with an external magnetic field was addressed by Boulatov and Kazakov in \cite{KazakovBoulatov87}, where they derived critical exponents using matrix models. More recent investigations include work by Duits, Hayford, and Lee in \cite{DHL, Hayford}, as well as research on higher genus maps by Bousquet-Mélou, Carrance, and Louf in \cite{BCL}. For the planar case, related combinatorial enumerations are provided in \cite{BMS} and \cite{AMT-Bij}.

The purpose of this work is to rigorously recover the critical exponents ($\alpha=-1$, $\beta=\frac{1}{2}$, $\gamma=5$ and $\delta=5$, see below for details) derived by Boulatov and Kazakov using analytic combinatorics. To this end, we focus on the Ising model on tetravalent planar maps with an external field. \medskip

	\subsection*{Ising model on tetravent planar maps}

	A spin configuration on a planar map $\mathfrak{m}$ is a mapping on the set of its vertices $\sigma : \rV(\mathfrak{m}) \rightarrow \left\{\ominus,\oplus\right\}$. We say that an edge $\left\{u,v\right\}$ of $\mathfrak{m}$ is monochromatic if $\sigma(u)=\sigma(v)$, and is frustrated otherwise. The number of monochromatic edges of $(\mathfrak{m},\sigma)$ is denoted by $m(\mathfrak{m},\sigma)$, and the number of its negative and positive spins are respectively denoted by $\sigma_\ominus$ and $\sigma_\oplus$.

	We denote by $\kT_n$ the set of tetravalent planar maps of size $n$ -- that is planar maps with $n$ vertices of degree 4 -- endowed with a spin configuration (see Section \ref{sect: Definitions} for precise definitions). The partition function of the Ising model on tetravalent planar maps of size $n$ is defined as the following (finite) sum:
$$\kZ_n(\nu,c) := \sum_{(\mathfrak{m},\sigma)\in\kT_n}\nu^{m(\mathfrak{m},\sigma)} c^{\sigma_\oplus-\sigma_\ominus}.$$
Note that, writing $\nu=\exp(2\beta)$ and $c=\exp(h)$, we recover up to a proportional term, the usual partition function and Gibbs measure of the Ising model with inverse temperature $\beta$ and external magnetic field $h$: 
\begin{equation}\label{equa: Physical partition function}
{\sum_{(\mathfrak{m},\sigma)\in\kT_n}    \exp\left(\beta \sum_{\left\{v,v'\right\}\in \rE(\mathfrak{m})}{\sigma(v)\sigma(v')} +h \sum_{v\in \rV(\mathfrak{m})}{\sigma(v)}\right)}{\exp(2n\beta)},
\end{equation}
where $\rV(\mathfrak{m})$ and $\rE(\mathfrak{m})$ are the set of vertices and edges of the map $\mathfrak{m}$. \bigskip

This partition function $\kZ_n$ will play a major role in this paper. We use analytic combinatorics to derive properties of $\kZ_n$ via its associated generating function ${\kZ\left(\nu,c,z\right)\in\Q(\nu,c)\llbracket z\rrbracket}$ defined as follows:
$$\kZ(\nu,c,z) \coloneq \sum_{n\geq 1}\kZ_n(\nu,c)z^n.$$
We first study the singularities of $\kZ$, leading to the  asymptotic behavior of the partition function. In particular, we show that $\kZ_n$  exhibits a combinatorial phase transition at the critical point $\nu_\star\coloneq 4$ when there is no external magnetic field. This transition is of the same nature as for the Ising model on triangulations (see \cite{Bernardi_BBM, IsingAMS}). We show that as soon as a magnetic field is present, no such phase transition occurs:
\begin{theo}\label{theo: Asymptotic behavior of the coefficients of Z}
For any $\nu>0$, there exists $0<\varepsilon_\nu<1$ such that for all $c\in\left[1-\varepsilon_\nu,1+\varepsilon_\nu\right]$, there exists a non-zero explicit constant $\gimel(\nu,c)$ in such a way that, as $n\rightarrow\infty$,
\begin{equation}
\kZ_n(\nu,c)\sim 
\begin{dcases}
 	\gimel(\nu_\star,1)\cdot \mu_{\mu,1}^{\hspace{0.3cm}-n}\hspace{0.05cm} n^{-7/3} & \text{ for $(c,\nu)=(1,\nu_\star)$,}	\\[0.2cm]
 	\gimel(\nu,c)\cdot \mu_{\mu,c}^{\hspace{0.3cm}-n}\hspace{0.05cm} n^{-5/2} & \text{ otherwise,}
\end{dcases}
\end{equation}
where $\mu_{\nu,c}$ is the radius of convergence of the power series $z \mapsto \kZ(\nu,c,z)$. %Furthermore, the quantities $\rho_{\nu,c}$ and $\gimel(\nu,c)$ are twice differentiable functions of $c$. 
\end{theo}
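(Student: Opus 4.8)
\emph{Proof proposal.} The plan is to exhibit $z\mapsto\kZ(\nu,c,z)$ as an algebraic function over $\Q(\nu,c)$ and then extract the coefficient asymptotics by singularity analysis. First I would introduce a refinement of $\kZ$ carrying one catalytic variable — for instance the generating function of rooted tetravalent planar maps with a boundary, with an extra variable recording the degree of the root face, while $z,\nu,c$ keep their meaning — and write a Tutte-type functional equation for it by deleting the root edge and distinguishing the monochromatic and frustrated cases and the two possible root spins. Eliminating the catalytic variable by the method of Bousquet-M\'elou and Jehanne (the quadratic method) yields a polynomial relation $P(\nu,c,z,\kZ)=0$; alternatively one may start from the known parametric / spectral-curve description of the associated two-matrix model. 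Either way $\kZ(\nu,c,\cdot)$ is algebraic, so by the general theory of singularity analysis of algebraic generating functions $\kZ_n(\nu,c)$ admits an asymptotic expansion $\sum_j \gimel_j\,\rho_j^{-n}\,n^{-1-\theta_j}$ with finitely many branch points $\rho_j$ and rational $\theta_j$, and the statement reduces to: (i) locating the dominant singularity $\mu_{\nu,c}$ and showing it is the unique one on its circle; and (ii) determining the local Puiseux exponent at $z=\mu_{\nu,c}$.

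For (i): since $\nu,c>0$ every coefficient $\kZ_n(\nu,c)$ is positive, so by Pringsheim's theorem the radius of convergence $\mu_{\nu,c}$ is itself a singularity; since $\kZ_1(\nu,c)\neq0$ the sequence $(\kZ_n)$ is aperiodic, hence $\mu_{\nu,c}$ is the unique dominant singularity, and being a singularity of an algebraic function it is a branch point of the curve $P=0$, so $\kZ$ continues analytically to a $\Delta$-domain at $\mu_{\nu,c}$. Applying the implicit function theorem to $P$ and to the system cutting out this branch point shows that $(\nu,c)\mapsto\mu_{\nu,c}$ is real-analytic near $\{\nu>0\}\times\{1\}$ and that, for each fixed $\nu$, the branch point stays isolated and dominant for $c$ in a small interval $[1-\varepsilon_\nu,1+\varepsilon_\nu]$, which fixes $\varepsilon_\nu$.

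For (ii): write the local expansion $\kZ(\nu,c,z)=A(z)+B(z)\,(1-z/\mu_{\nu,c})^{\alpha}+\cdots$ near $z=\mu_{\nu,c}$, with $A$ holomorphic there and $\alpha$ the first non-integer exponent. In the generic situation the auxiliary series produced by the elimination has an ordinary square-root branch point, and the resonance characteristic of map generating functions — the coefficient of $(1-z/\mu_{\nu,c})^{1/2}$ in $\kZ$ vanishes at $z=\mu_{\nu,c}$ while that of $(1-z/\mu_{\nu,c})^{3/2}$ does not (equivalently, $\kZ$ is up to lower-order terms an antiderivative in $z$ of a series with a square-root branch point) — forces $\alpha=3/2$; the transfer theorem then gives $\kZ_n(\nu,c)\sim\gimel(\nu,c)\,\mu_{\nu,c}^{-n}\,n^{-5/2}$ with $\gimel(\nu,c)\neq0$ an explicit algebraic expression in $\nu,c$. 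At $(c,\nu)=(1,\nu_\star)$ with $\nu_\star=4$ an extra degeneracy occurs — two branch points of $P=0$ coalesce, so the discriminant $\mathrm{disc}_\kZ P$ acquires a higher-order zero and the local uniformiser becomes a cube root — and the analogous (double) resonance pushes the leading fractional exponent to $\alpha=4/3$, whence $\kZ_n(\nu_\star,1)\sim\gimel(\nu_\star,1)\,\mu_{\nu_\star,1}^{-n}\,n^{-7/3}$.

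The main obstacle is exactly this coalescence analysis at $(1,\nu_\star)$. One must show that the relevant factor of $\mathrm{disc}_\kZ P$ — equivalently the critical-point equation written in a uniformising parameter — has, at $(\nu,c)=(4,1)$, a double root degenerating to a triple root with non-vanishing next coefficient, so that $\alpha$ jumps from $3/2$ to exactly $4/3$ and to no other value, while staying a simple branch point with $\alpha=3/2$ for every other $(\nu,c)$ in the neighbourhood, in particular along $\nu=\nu_\star,\ c\neq1$ and along $c=1,\ \nu\neq\nu_\star$. Pinning the degeneracy down at $\nu_\star=4$, controlling the non-vanishing of the leading constants $\gimel$, and making the transfer estimate uniform in $(\nu,c)$ (which needs the dominant singularity to remain isolated on its circle throughout the neighbourhood) are the technical heart of the argument; the rest is a routine, if lengthy, application of the quadratic method and of singularity analysis.
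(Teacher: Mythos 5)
Your overall strategy (algebraicity of $\kZ$, then singularity analysis and transfer) is the same skeleton as the paper's, but two of your key shortcuts are not correct, and they are precisely where the real work lies. First, positivity of the coefficients plus aperiodicity does \emph{not} give uniqueness of the dominant singularity: Pringsheim only places one singularity at $z=\mu_{\nu,c}$, and the Daffodil lemma only bounds $|\kZ(\nu,c,z)|$ strictly below its value at the positive real point, which does not exclude further branch points on the circle of convergence where the function stays bounded. The paper has to rule these out by hand: the discriminant of the cancelling polynomial of $S$ factors (at $c=1$) as $P_1^3P_2P_3$, and for each root of the same modulus as $\rho_\nu$ (which does occur, e.g.\ at $\nu=1/3$ and at $\nu=\tfrac{4(\pm 4+\sqrt{34})^2}{9}$, and for a root of $P_3$ at a special $\widetilde\nu$) one must compute Puiseux expansions via Newton's polygon and check that the branch equal to $S$ is regular there; for $c\neq 1$ one must further track how all eight roots of the discriminant move. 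Second, your implicit-function-theorem claim that $(\nu,c)\mapsto\mu_{\nu,c}$ is real-analytic near $c=1$ fails exactly in the regime $\nu\ge\nu_\star$: there the dominant singularity sits at the root of the cubed factor $P_1^3$, four roots of the discriminant coalesce at $c=1$, and $\rho_{\nu,c}$ is a power series in $\sqrt{1-c}$, not analytic in $c$ (this non-analyticity is what later produces the nonzero spontaneous magnetization). The paper replaces your IFT step by a combinatorial monotonicity argument giving continuity of $c\mapsto\rho_{\nu,c}$, and then a perturbative comparison of the moduli of the four colliding branches to identify which one is the radius of convergence.

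Third, the exponent determination is asserted through a ``resonance'' heuristic ($\alpha=3/2$ generically, $\alpha=4/3$ at $(\nu_\star,1)$), which you yourself flag as the technical heart but do not carry out. In the paper this is done by explicit Newton-polygon computations on the cancelling polynomial of $S$ (square-root singularity for $\nu\neq\nu_\star$, exponent $1/3$ at $\nu_\star$, $c=1$), substitution into the rational parametrization $\kZ(\nu,c,cz)=\mathrm{Pol}_\kZ(S,\dots)/\bigl(9z^2(1-\nu^2)(1+3c^2(1-\nu^2)S)\bigr)$ -- which itself requires proving that the denominator cannot vanish on the closed disk, via a Sturm-sequence count showing $S(\nu,c,\rho_{\nu,c})<\tfrac{1}{3c^2|1-\nu^2|}$ -- and finally a verification that the coefficient $\daleth(\nu,c)$ of $(1-z/\mu_{\nu,c})^{3/2}$ does not vanish for $c$ near $1$ (by computing $\daleth(\nu,1)$, $\partial_c\daleth(\nu,1)$, $\partial^2_c\daleth(\nu,1)$), which is also what guarantees that for $\nu=\nu_\star$, $c\neq1$ the exponent stays $3/2$ rather than $4/3$. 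So your proposal correctly identifies the architecture but leaves unproved, or proves by invalid arguments, the three points that constitute the actual proof: uniqueness of the dominant singularity, behaviour of $\mu_{\nu,c}$ in $c$ across the multiple-root regime $\nu\ge\nu_\star$, and the non-vanishing of the singular coefficients fixing the exponents.
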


Notice that our result is valid for values of $c$ close to $1$, meaning that we only consider weak external magnetic fields. This is only a technical restriction. The result should be true for all $c>0$, but we did not push our arguments for the sake of simplicity.

\subsection*{Critical exponents}

Our main results deal specifically with the critical exponents for the Ising model on tetravalent maps in presence of a magnetic field. These exponents are defined using the free energy of the model. In our setting, the finite volume free energy is defined by:
% Two central quantities of interest of the finite model are the number of monochromatic edges, and the difference between the number of spins $\oplus$ and the number of spins $\ominus$. A natural way to there study is to consider the free energy $F_n$ of the finite model. It is defined by 
\begin{equation}\label{equa: Definition of the free energy F_n}
F_n(\nu,c) \coloneq \frac{1}{n}\log \kZ_n(\nu,c),
\end{equation} 
The free energy in the thermodynamic limit is then defined as usual as:
\begin{equation} \label{equa: Definition of the free energy F}
\rF(\nu,c)\coloneq\lim_{n\rightarrow\infty} F_n(\nu,c).
\end{equation}
We prove that the phase transition of the model is of the third order, or equivalently that the critical exponent $\alpha$ is equal to $-1$:

\begin{theo}\label{theo : Critical exponents - alpha}
Under the assumptions of Theorem \ref{theo: Asymptotic behavior of the coefficients of Z}, the thermodynamic limit of the free energy exists and it can be expressed as a function of the radius of convergence $\mu_{\nu,c}$ of $\kZ(\nu,c,z)$:
\begin{equation}\label{equa: Expresion of the free energy in function of the radius of convergence of the power series I}
\rF(\nu,c)=-\log{\mu_{\nu,c}}.
\end{equation} 
Furthermore, when there is no external magnetic field, the free energy $\rF(\nu,1)$ is three times differentiable, and its third derivative is continuous except at $\nu=\nu_\star$. 
\end{theo}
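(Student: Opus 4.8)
The plan is to derive both assertions of Theorem \ref{theo : Critical exponents - alpha} from the asymptotic expansion in Theorem \ref{theo: Asymptotic behavior of the coefficients of Z}. For the formula $\rF(\nu,c)=-\log\mu_{\nu,c}$, I would take logarithms in the asymptotic relation $\kZ_n(\nu,c)\sim\gimel(\nu,c)\,\mu_{\nu,c}^{-n}n^{-\kappa}$ (with $\kappa=7/3$ or $5/2$), giving $\log\kZ_n(\nu,c)=-n\log\mu_{\nu,c}-\kappa\log n+\log\gimel(\nu,c)+o(1)$; dividing by $n$ and letting $n\to\infty$ shows the limit in \eqref{equa: Definition of the free energy F} exists and equals $-\log\mu_{\nu,c}$. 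This step is essentially immediate once Theorem \ref{theo: Asymptotic behavior of the coefficients of Z} is in hand, the only mild point being that $\gimel(\nu,c)\neq 0$ so that $\log\gimel(\nu,c)$ is finite.

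The substantive part is the regularity statement for $\nu\mapsto\rF(\nu,1)=-\log\mu_{\nu,1}$. Since $\log$ is smooth and nonvanishing issues do not arise ($\mu_{\nu,1}>0$), it suffices to understand the regularity of $\nu\mapsto\mu_{\nu,1}$, the dominant singularity of the generating function $\kZ(\nu,1,z)$. The plan is to use the explicit algebraic/parametric description of $\kZ$ and of its dominant singularity that must have been obtained in the course of proving Theorem \ref{theo: Asymptotic behavior of the coefficients of Z}: away from $\nu_\star$, the singularity is of square-root type (reflected in the $n^{-5/2}$ regime), and $\mu_{\nu,1}$ is given locally as an analytic — in fact algebraic — function of $\nu$, obtained by solving a polynomial system (a branch point condition) whose Jacobian is nonsingular for $\nu\neq\nu_\star$; hence $\nu\mapsto\mu_{\nu,1}$ is real-analytic, a fortiori $C^3$, on each of the intervals $(0,\nu_\star)$ and $(\nu_\star,\infty)$. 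I would isolate this as a lemma: for $\nu\neq\nu_\star$ the map $\nu\mapsto\mu_{\nu,1}$ is real-analytic, so $\rF(\cdot,1)$ is $C^\infty$ there and in particular its third derivative is continuous on $(0,\nu_\star)\cup(\nu_\star,\infty)$.

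It then remains to analyze the behavior at $\nu=\nu_\star$ and to show that $\rF(\cdot,1)$ is three times differentiable there while the third derivative fails to be continuous. The change in the subexponential exponent from $n^{-5/2}$ to $n^{-7/3}$ at $\nu_\star$ signals that the nature of the singularity of $\kZ(\nu,1,z)$ in $z$ degenerates at $\nu_\star$ (a confluence of branch points, typical exponent $3/2\to 4/3$), and correspondingly $\mu_{\nu,1}$ acquires a singular term. Concretely, I expect an expansion of the form $\mu_{\nu,1}=\mu_{\nu_\star,1}+a(\nu-\nu_\star)+b(\nu-\nu_\star)^2+d\,(\nu-\nu_\star)^{3}\log|\nu-\nu_\star|+\dots$ or with a $|\nu-\nu_\star|^{3}\,\mathrm{sgn}(\nu-\nu_\star)$-type correction — in any case a term that is $C^2$ but not $C^3$ at $\nu_\star$, matching $\alpha=-1$ via the standard relation (singular part of $\rF$ scaling like $|\nu-\nu_\star|^{2-\alpha}$). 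To extract this I would, near $(\nu_\star,\mu_{\nu_\star,1})$, write down the defining equation for the dominant singularity (the system expressing that $z$ is a branch point of the algebraic function $\kZ$), observe that at $\nu_\star$ the relevant partial derivative vanishes so the implicit function theorem fails at first order, and then perform a Newton–Puiseux / Weierstrass-preparation analysis to get the leading singular exponent of $\mu_{\nu,1}-\mu_{\nu_\star,1}$ in $\nu-\nu_\star$. Composing with $-\log(\cdot)$ preserves the singularity type since $\log$ is analytic and $\mu_{\nu_\star,1}\neq 0$.

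The main obstacle is this local analysis at $\nu_\star$: one must identify precisely the leading singular term of $\mu_{\nu,1}$ as a function of $\nu-\nu_\star$ and verify it is exactly of "$C^2$ but not $C^3$" type — neither better nor worse — since that is what pins down $\alpha=-1$ rather than some other third-order-transition-compatible value. Everything else (the free-energy formula, and real-analyticity off $\nu_\star$) is routine given the singularity analysis underlying Theorem \ref{theo: Asymptotic behavior of the coefficients of Z}; I would lean on the explicit parametrization of $\kZ(\nu,1,z)$ already established there, so that the branch-point equations are concrete polynomials and the Newton–Puiseux computation is finite and checkable.
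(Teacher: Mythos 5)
Your first half is exactly the paper's argument: take logarithms in the asymptotics of Theorem \ref{theo: Asymptotic behavior of the coefficients of Z}, divide by $n$, and use $\gimel(\nu,c)\neq 0$; nothing more is needed for \eqref{equa: Expresion of the free energy in function of the radius of convergence of the power series I}.

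For the regularity at $c=1$, however, your plan rests on a picture of the transition that is not what actually happens, and it omits the step that carries the real content. You propose to treat $\nu_\star$ as a point where the branch-point system defining the dominant singularity degenerates, and to extract a genuinely singular term of $\mu_{\nu,1}$ in $\nu-\nu_\star$ (a $\log$ or fractional-power correction) by Newton--Puiseux. But the paper has already computed $\mu_{\nu,1}=\rho_\nu$ in closed form in Lemma \ref{lemm : study of parameter S when c=1}, Equation \eqref{equa: Expression of rho(nu,1)}: for $\nu<\nu_\star$ it is a root of the factor $P_2$ of the discriminant, and for $\nu\geq\nu_\star$ it is the root of $P_1$. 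Each of these two branches is an explicit algebraic function of $\nu$, analytic across $\nu_\star$; they simply cross at $\nu_\star=4$. So there is no Puiseux-singular term in the variable $\nu$ at all: the failure of smoothness of $\rF(\nu,1)=-\log\rho_\nu$ comes from the switch between two analytic branches whose values and first two derivatives match at $\nu_\star$ while their third derivatives differ. A local Newton--Puiseux analysis of the characteristic system at $(\nu_\star,\rho_{\nu_\star})$ cannot by itself detect this, because every candidate branch is regular there in $\nu$; what decides the matter is knowing, on each side of $\nu_\star$, \emph{which} root of the discriminant is the radius of convergence --- and that identification is precisely the work done in Lemma \ref{lemm : study of parameter S when c=1}, after which the theorem reduces to plugging the two closed forms of $\rho_\nu$ into $-\log$ and comparing derivatives up to order three at $\nu=4$ (this is the paper's one-line proof). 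Incidentally, your hedged model term $|\nu-\nu_\star|^{3}\,\mathrm{sgn}(\nu-\nu_\star)$ equals $(\nu-\nu_\star)^3$ and is smooth; the correct model for ``$C^2$ but not $C^3$'' here is a term of type $(\nu-\nu_\star)_+^3$, i.e.\ a piecewise cubic mismatch, and no logarithm occurs.
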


The magnetization $M_n$ and the susceptibility $\chi_n$ of the model are defined as follows:
\begin{equation}\label{equa: Definition Mn and Xn : physic}
M_n(\nu,c)\coloneq c \partial_c F_n(\nu,c)\text{, \hspace{0.5cm}}\chi_n(\nu,c)\coloneq\left(c \partial_c\right)^2 F_n(\nu,c),
\end{equation} 
where $\left(c \partial_c\right)^2$ is the operator $c \partial_c$ applied twice (applying the operator $c\partial_c$ is equivalent to applying the operator $\partial_h$ in the usual parametrization of the Ising model with a magnetic field $h$). 
In the thermodynamic limit, the magnetization and the susceptibility are usually defined by:
\begin{equation}\label{equa: Definition of magnetization and susceptibility of the thermodynamic limit}
M(\nu,c)\coloneq c \partial_c \rF(\nu,c)\text{, \hspace{0.5cm}}  \chi(\nu,c)\coloneq\left(c \partial_c\right)^2 \rF(\nu,c).
\end{equation} 
We show in Lemma \ref{lemm: Concergence of Mn} that the magnetization of the thermodynamic limit model $M$ is indeed the limits of the finite volume magnetization $M_n$. \medskip

We are able to compute the spontaneous magnetization $M_0$ in the thermodynamic limit, see Figure \ref{fig: Spontaneous magnetization and Susceptibility}. It is defined as follows:
$$M_0(\nu)\coloneq\lim_{c\rightarrow 1^+} M(\nu,c).$$

This quantity represents the residual magnetization induced by the environment after the external magnetic field has been reduced to zero. The classical notation $M_0$ is used because the external magnetic field is null when the variable $h$ is zero, which, under our change of variables, corresponds to $c=1$. The usual critical parameter of the model is defined as:
$$\inf{\left\{\nu>0 \hspace{0.1cm}\vert\hspace{0.1cm} M_0(\nu)>0 \right\}}.$$
It is related to Curie's temperature through our change of variables. We are able to establish an explicit formula for  $M_0(\nu)$, from which we deduce the critical exponent $\beta=\frac{1}{2}$.

\begin{theo}\label{theo: Critical exponents - beta}
Recall that $\nu_\star=4$. For every $\nu\in (0,\infty)$ one has:
$$M_0(\nu)= \frac{3\nu\sqrt{\nu^2-16}}{3\nu^2-8}\mathbf{1}_{\nu\geq \nu_\star}.$$
As a consequence when $\nu\rightarrow \nu_\star\hspace{0.03cm}^{+}$:
$$M_0(\nu) \sim \frac{6\sqrt{2}}{5}\sqrt{\frac{\nu}{4}-1}.$$
\end{theo}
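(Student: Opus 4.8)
The starting point is Theorem~\ref{theo : Critical exponents - alpha}: since $\rF(\nu,c)=-\log\mu_{\nu,c}$ and $M(\nu,c)=c\,\partial_c\rF(\nu,c)$, we get
\[
M(\nu,c)=-\frac{c\,\partial_c\mu_{\nu,c}}{\mu_{\nu,c}},
\]
and, using that $c\mapsto\mu_{\nu,c}$ is continuous at $c=1$ with $\mu_{\nu,1}\in(0,\infty)$,
\[
M_0(\nu)=\lim_{c\to 1^+}M(\nu,c)=-\frac{1}{\mu_{\nu,1}}\,\lim_{c\to 1^+}\partial_c\mu_{\nu,c}.
\]
So everything reduces to computing the right derivative at $c=1$ of the dominant singularity $\mu_{\nu,c}$ of $\kZ(\nu,c,z)$, whose explicit algebraic characterization (together with the companion series of the model) was obtained while proving Theorem~\ref{theo: Asymptotic behavior of the coefficients of Z}.

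I would then use the spin-flip symmetry $\kZ_n(\nu,c)=\kZ_n(\nu,c^{-1})$ — obtained by exchanging $\oplus$ and $\ominus$, which turns $c^{\sigma_\oplus-\sigma_\ominus}$ into $c^{\sigma_\ominus-\sigma_\oplus}$ — so that $\mu_{\nu,c}=\mu_{\nu,c^{-1}}$. This forces the branch of the algebraic system realizing the radius of convergence for $c$ slightly above $1$ to be the $(c\mapsto c^{-1})$-reflection of the branch realizing it for $c$ slightly below $1$, hence $\partial_c\mu_{\nu,c}|_{c=1^+}$ is the ordinary derivative at $c=1$ of the analytic branch $\mu^+$ selected on the right. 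Now split according to the phase. For $0<\nu<\nu_\star$, the earlier analysis shows that the dominant singularity is realized by a single branch, which is therefore $(c\mapsto c^{-1})$-invariant, i.e.\ even in $h=\log c$ near $h=0$; thus $\partial_c\mu_{\nu,c}|_{c=1}=0$ and $M_0(\nu)=0$, which accounts for the indicator $\mathbf{1}_{\nu\geq\nu_\star}$. For $\nu>\nu_\star$, two genuinely distinct branches are exchanged at $c=1$; I would differentiate the polynomial system characterizing $\mu^+$ and its companion series with respect to $c$ and evaluate at $c=1$. There the ``odd'' auxiliary series vanish by symmetry, so only a small linear system survives; solving it yields $\partial_c\mu^+|_{c=1}$ as an explicit rational function of $\nu$ and $\mu_{\nu,1}$, and substituting the value of $\mu_{\nu,1}$ determined in the $c=1$ analysis (where $\nu_\star=4$ enters through a discriminant proportional to $\nu^2-16$) and simplifying gives
\[
M_0(\nu)=\frac{3\nu\sqrt{\nu^2-16}}{3\nu^2-8}.
\]
Since $\sqrt{\nu^2-16}=\sqrt{(\nu-4)(\nu+4)}$ is real exactly for $\nu\geq\nu_\star$ and vanishes at $\nu_\star$, the two regimes glue continuously and the stated formula holds on all of $(0,\infty)$.

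The asymptotic statement is then a one-line expansion: as $\nu\to\nu_\star{}^{+}$ we have $\nu^2-16=(\nu-4)(\nu+4)\sim 8(\nu-4)$, $3\nu^2-8\to 40$ and $3\nu\to 12$, hence
\[
M_0(\nu)\sim\frac{12\sqrt{8(\nu-4)}}{40}=\frac{3\sqrt2}{5}\sqrt{\nu-4}=\frac{6\sqrt2}{5}\sqrt{\frac{\nu}{4}-1}.
\]

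The main obstacle is the middle step. One must correctly identify, in each phase, which branch of the algebraic system defining $\mu_{\nu,c}$ controls the radius of convergence for $c>1$ (a symmetric branch below $\nu_\star$, a ``polarized'' one above it) and check that the relevant one-sided derivative of $\mu_{\nu,c}$ at $c=1$ is indeed the derivative of that analytic branch; and then one must carry the implicit differentiation and the elimination of the several auxiliary series of the model through to the closed rational form, keeping track of the value of $\mu_{\nu,1}$. The rest — the reduction via Theorem~\ref{theo : Critical exponents - alpha} and the final Taylor expansion — is routine.
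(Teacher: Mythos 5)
Your reduction of $M_0(\nu)$ to the one-sided derivative of the dominant singularity at $c=1$ is exactly the paper's route (there it is written as $M(\nu,c)=-\bigl(1+\frac{c\partial_c\rho_{\nu,c}}{\rho_{\nu,c}}\bigr)$ with $\mu_{\nu,c}=c\rho_{\nu,c}$, and the constant term of a Puiseux expansion of $\rho_{\nu,c}$ at $c=1$ is extracted). But two of your steps have real problems. First, the identity $\kZ_n(\nu,c)=\kZ_n(\nu,c^{-1})$ is false: the maps of $\kT$ have their root vertex constrained to carry a $\oplus$ spin, so the global spin flip sends $\kT_n$ to root-$\ominus$ maps; concretely $\kZ_1(\nu,c)=2\nu^2c$ is not invariant under $c\mapsto c^{-1}$. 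The weaker fact you actually use, $\mu_{\nu,c}=\mu_{\nu,c^{-1}}$, can be rescued (flipping only the root spin is a bijection that changes the weight by a factor between $\nu^{-4}c^{-2}$ and $\nu^{4}c^{-2}$, so the root-$\ominus$ series has the same radius), but this must be argued; as written, your even-in-$\log c$ argument giving $M_0(\nu)=0$ for $\nu<\nu_\star$ --- otherwise a nice shortcut compared to the paper's direct expansion, legitimate once one also invokes the analyticity in $(1-c)$ of the dominant branch $R_{2,1}$ from Lemma \ref{lemm : study of parameter S for c close to 1} --- rests on a false symmetry.

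The more serious gap is the regime $\nu>\nu_\star$, which carries the whole content of the formula. There $c=1$ is a ramification point of $c\mapsto\mu_{\nu,c}$: by Lemma \ref{lemm : study of parameter S for c close to 1}, the four roots $R_{1,i}$ of the discriminant merge at $c=1$ and are Puiseux series in $\sqrt{1-c}$, not power series in $c-1$ (and it is four branches colliding, not ``two branches exchanged''). Hence there is no analytic branch $\mu^{+}$ whose ordinary derivative you can take, and implicit differentiation of the defining polynomial system at $c=1$ degenerates precisely because the relevant partial derivative vanishes there; your ``small linear system'' is singular at that point. The existence of $\lim_{c\to1^+}\partial_c\mu_{\nu,c}$ is not automatic: it holds only because the term of order $\vert 1-c\vert^{1/2}$ is absent from the expansion of the dominant positive root, which has to be checked (the surviving $\vert 1-c\vert^{3/2}$ term is exactly what makes $\chi(\nu,1)=\infty$ in Theorem \ref{theo: Critical exponents - gamma}). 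The paper resolves this by computing the Newton--Puiseux expansions of the $R_{1,i}$, identifying which one is $\rho_{\nu,c}$, and reading off the coefficient of $(1-c)$; your plan needs that Puiseux computation in place of the linear system, and the closed form $\frac{3\nu\sqrt{\nu^2-16}}{3\nu^2-8}$ (into which $\rho_{\nu,1}=\frac{3\nu^2-8}{36(1-\nu^2)^2}$ enters) is asserted rather than derived. The final expansion as $\nu\to\nu_\star^{+}$ is correct.
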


Note that the physical critical parameter of Theorem \ref{theo: Critical exponents - beta} coincides exactly with the combinatorial critical parameter of Theorem \ref{theo: Asymptotic behavior of the coefficients of Z}. Furthermore, we are able to compute the critical behavior of the magnetization at the critical temperature with low external magnetic field, yielding $\delta=5$:

\begin{theo}\label{theo: Critical exponents - delta}
The thermodynamic limit of the magnetization at $\nu=\nu_\star$ has the following asymptotic behavior as $c\rightarrow 1^+$:
$$M(\nu_\star,c) \sim \frac{3}{5}2^{3/5}\cdot \left(c-1\right)^{1/5}.$$
\end{theo}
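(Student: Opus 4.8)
The plan is to exploit the two facts already established: that the free energy equals $-\log\mu_{\nu,c}$ (Theorem \ref{theo : Critical exponents - alpha}) and hence $M(\nu,c)=-c\,\partial_c\log\mu_{\nu,c}$, and that $M(\nu_\star,c)$ is the $c\to1^+$ limit of the finite-volume magnetization (Lemma \ref{lemm: Concergence of Mn}). Everything therefore reduces to understanding the behavior of the radius of convergence $\mu_{\nu_\star,c}$ as a function of $c$ near $c=1$. The generating function $\kZ(\nu,c,z)$ is algebraic (it arises from the usual tetravalent-map/Ising functional equations), so $\mu_{\nu_\star,c}$ is an algebraic function of $c$; the content of the theorem is to pin down the nature of its singularity at $c=1$. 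First I would extract, from the singularity analysis underlying Theorem \ref{theo: Asymptotic behavior of the coefficients of Z}, the algebraic relation governing the dominant singularity $z=\mu_{\nu,c}$ — typically a system expressing that at $(z,\nu,c)=(\mu_{\nu,c},\nu,c)$ some resolvent or auxiliary series has a branch point (a double root of a polynomial discriminant). At $(\nu,c)=(\nu_\star,1)$ this branch-point condition degenerates: the $n^{-7/3}$ exponent in Theorem \ref{theo: Asymptotic behavior of the coefficients of Z} signals a confluence of singularities, i.e. a higher-order critical point, rather than the generic square-root.

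The second step is to perform a local analysis of that algebraic system near the special point $(\nu_\star,1)$. Writing $c=1+\delta c$ and $\mu_{\nu_\star,c}=\mu_{\nu_\star,1}+u$, I expect the defining equations, after eliminating the internal variables, to collapse to a relation of the schematic form $u^{5}\asymp (\delta c)$ plus higher-order terms — the exponent $5$ being forced precisely by the $n^{-7/3}$ versus $n^{-5/2}$ dichotomy (a $z^{4/3}$-type singularity at the multicritical point corresponds, via the standard transfer theorems, to the cusp structure that produces a fifth-power scaling in the control parameter). Concretely I would use the physics-style parametrization already in play in the paper — express $\mu$, and the auxiliary generating functions, rationally in a single uniformizing parameter — and Taylor-expand the one remaining scalar equation in that parameter around its critical value; matching lowest-order terms yields $u \sim \kappa\,(c-1)^{1/5}$ for an explicit constant $\kappa$ depending on the derivatives of the defining polynomial at $(\nu_\star,1)$.

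The final step is bookkeeping: from $M(\nu_\star,c)=-c\,\partial_c\log\mu_{\nu_\star,c}$ and $\mu_{\nu_\star,c}=\mu_{\nu_\star,1}(1 + (\kappa/\mu_{\nu_\star,1})(c-1)^{1/5}+\cdots)$ one gets $\partial_c\log\mu_{\nu_\star,c}\sim (\kappa/(5\mu_{\nu_\star,1}))(c-1)^{-4/5}$, which is singular, so this cannot be the right normalization — rather, since $M_0(\nu_\star)=0$ by Theorem \ref{theo: Critical exponents - beta}, the magnetization itself is the relevant quantity and one should track that $M$ is (up to sign and the factor $c$) the first derivative of $\rF$, whose singular part scales like $(c-1)^{6/5}$, giving $M(\nu_\star,c)\sim \mathrm{const}\cdot(c-1)^{1/5}$. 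Carrying the constant through the uniformization gives $\tfrac{3}{5}2^{3/5}$; I would cross-check this numerical value against the $\beta$-computation, since the constants in Theorems \ref{theo: Critical exponents - beta} and \ref{theo: Critical exponents - delta} must be compatible along the two approaches to $(\nu_\star,1)$ (a Widom-type scaling relation).

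The main obstacle will be the local analysis at the confluent critical point: away from $(\nu_\star,1)$ the dominant singularity is a simple branch point and the square-root / $n^{-5/2}$ behavior follows from routine singularity analysis, but at $(\nu_\star,1)$ several branches of the defining algebraic curve come together, and one must carefully identify which combination of internal variables remains finite, rescale the others by the correct fractional powers of $c-1$, and verify that the cubic/quintic leading terms do not accidentally vanish. Controlling the error terms uniformly so that the $c\to1^+$ limit of the finite-volume magnetization genuinely equals the derivative of the limiting free energy (rather than just formally) is the other delicate point, but that is exactly what Lemma \ref{lemm: Concergence of Mn} is there to supply.
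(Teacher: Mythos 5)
Your overall strategy---reduce everything via $\rF=-\log\mu_{\nu,c}$ and Lemma \ref{lemm: Concergence of Mn} to the local behavior of the algebraic function $c\mapsto\mu_{\nu_\star,c}$ at $c=1$, and analyze the confluent branch point there---is the same in spirit as the paper's, but as written it has a genuine gap at the decisive step: the exponent $1/5$ is never actually derived. Your first local ansatz, $\mu_{\nu_\star,c}-\mu_{\nu_\star,1}\sim\kappa\,(c-1)^{1/5}$, is wrong (as you notice, it would force $M(\nu_\star,c)\sim(c-1)^{-4/5}\to\infty$, contradicting $M_0(\nu_\star)=0$), and the patch you substitute---``the singular part of $\rF$ scales like $(c-1)^{6/5}$''---is asserted on the basis of a heuristic correspondence between the $z^{4/3}$ multicritical singularity of Theorem \ref{theo: Asymptotic behavior of the coefficients of Z} and a fifth-power scaling in the field. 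No such correspondence is established anywhere, and a Widom-type scaling relation cannot be invoked as a proof since it is exactly the kind of statement these theorems are meant to establish. The exponent (and the constant $\tfrac{3}{5}2^{3/5}$, which you do not derive at all) has to come out of an explicit Puiseux computation at $c=1$, which your sketch only gestures at.

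There is a second gap you flag but do not close: branch selection at the confluent point $(\nu_\star,1)$, where several Puiseux branches of the relevant algebraic curve meet. The paper resolves both issues concretely: since $\widetilde{\mathcal{D}}_{\nu,c}(\mu_{\nu,c})=0$ for the (rescaled) discriminant of the equation defining $S$, implicit differentiation gives $M(\nu,c)$ as a rational expression in $\partial_c\widetilde{\mathcal{D}}$ and $\partial_z\widetilde{\mathcal{D}}$ evaluated at $\mu_{\nu,c}$, and a resultant in the variable $z$ then produces an explicit degree-$8$ polynomial $\mathrm{Pol}^1_{\nu_\star,c}(Y)$ annihilating $M(\nu_\star,c)$; one computes the Puiseux expansions of all its roots at $c=1$ and identifies the correct one by monotonicity of $c\mapsto M(\nu_\star,c)$, which follows because $\chi_n$ is a variance (so $M_n$ is nondecreasing in $c$) together with Lemma \ref{lemm: Concergence of Mn}. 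If you add this kind of explicit cancelling polynomial for $M$ (or, equivalently, carry out the actual two-variable Newton-polygon computation for $\mu_{\nu_\star,c}$ with a concrete criterion singling out the true radius-of-convergence branch), your argument becomes the paper's proof; without it, the exponent, the constant, and the branch identification all remain unproved.
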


\begin{figure}[t]
  \centering
  \subfloat	{\includegraphics[scale=0.9, page=1]{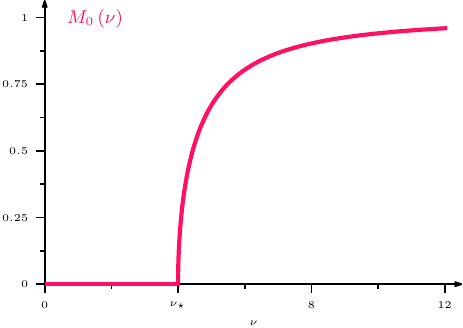}}\quad\quad
  \subfloat	{\includegraphics[scale=0.9, page=1]{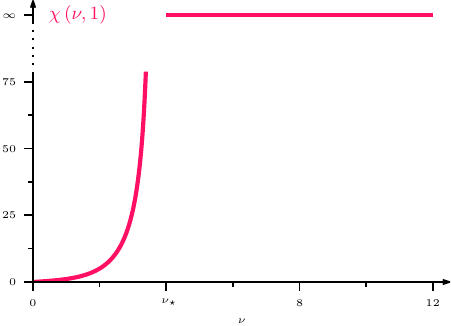}}
  \caption{The curves of the spontaneous magnetization $M_0(\nu)$ and of the susceptibility $\chi(\nu,1)$ as functions of $\nu>0$. A transition occurs at the critical point $\nu_\star=4$.}
  \label{fig: Spontaneous magnetization and Susceptibility}
\end{figure}

Finally, we turn to the susceptibility in the thermodynamic limit. Again we establish an explicit formula at $c=1$, allowing to derive our last critical exponent $\gamma=2$:

\begin{theo}\label{theo: Critical exponents - gamma}
The thermodynamic limit of thermodynamic limit of susceptibility satisfies
$$\chi(\nu,1) = 
\begin{cases}
	\frac{3\nu}{(2\sqrt{\nu}+1)(\sqrt{\nu}-2)^2} & \text{for $\nu\in (0,\nu_\star)$},\\
	\infty	& \text{for $\nu \geq \nu_\star$}.
\end{cases}$$
As a consequence when $\nu\rightarrow \nu_\star\hspace{0.03cm}^{-}$:
$$\chi(\nu,1) \sim \frac{12}{5\left(1-\frac{\nu}{4}\right)^2}.$$
\end{theo}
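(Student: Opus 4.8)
The plan is to reduce the statement to the behaviour of the dominant singularity $\mu_{\nu,c}$, viewed as a function of the magnetic field, near $c=1$. By Theorem~\ref{theo : Critical exponents - alpha} we have $\rF(\nu,c)=-\log\mu_{\nu,c}$. Writing $h=\log c$, so that $c\partial_c=\partial_h$, one gets $M(\nu,c)=-\partial_h\mu_{\nu,c}/\mu_{\nu,c}$ and
\[
\chi(\nu,c)=-\frac{\partial_h^2\mu_{\nu,c}}{\mu_{\nu,c}}+\Bigl(\frac{\partial_h\mu_{\nu,c}}{\mu_{\nu,c}}\Bigr)^{2}.
\]
The spin-flip bijection $\sigma\mapsto-\sigma$ on $\kT_n$ preserves $m(\mathfrak{m},\sigma)$ and reverses the sign of $\sigma_\oplus-\sigma_\ominus$, so $\kZ_n(\nu,c)=\kZ_n(\nu,c^{-1})$ and $\mu_{\nu,c}=\mu_{\nu,c^{-1}}$; hence, as a function of $h$, $\mu_{\nu,\cdot}$ is even. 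Therefore, wherever $\mu_{\nu,\cdot}$ is differentiable at $c=1$ one has $\partial_h\mu_{\nu,c}\big|_{c=1}=0$, so $M(\nu,1)=0$ and
\[
\chi(\nu,1)=-\frac{\partial_h^2\mu_{\nu,e^h}\big|_{h=0}}{\mu_{\nu,1}}=\lim_{c\to1}\frac{M(\nu,c)}{c-1}.
\]
Thus it suffices to expand $\mu_{\nu,c}$, equivalently $M(\nu,c)$, to second order in $h$ at $h=0$.

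For $\nu\ge\nu_\star$ no such expansion exists, and I would deduce $\chi(\nu,1)=\infty$ directly from Theorems~\ref{theo: Critical exponents - beta} and~\ref{theo: Critical exponents - delta}. If $\nu>\nu_\star$, then $M(\nu,c)\to M_0(\nu)>0$ as $c\to1^{+}$, while by the symmetry above $M(\nu,c)=-M(\nu,c^{-1})\to-M_0(\nu)$ as $c\to1^{-}$: the magnetization $M=c\partial_c\rF$ jumps upward at $c=1$, so $M(\nu,c)/(c-1)\to+\infty$. If $\nu=\nu_\star$, then $M(\nu_\star,c)\sim\tfrac{3}{5}2^{3/5}(c-1)^{1/5}$, so again $M(\nu_\star,c)/(c-1)\to+\infty$. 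In both cases $\rF(\nu,\cdot)$ fails to be twice differentiable at $c=1$---consistently with its convexity in $h$---which is what the notation $\chi(\nu,1)=\infty$ records.

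The substantial case is $0<\nu<\nu_\star$. I would work from the description of $\mu_{\nu,c}$ that underlies Theorem~\ref{theo: Asymptotic behavior of the coefficients of Z}: a finite algebraic system---issued from the Tutte/loop equations for $\kZ(\nu,c,z)$ together with the one-cut condition, or equivalently from the rational uniformisation of the generating series---which determines $\mu_{\nu,c}$ and the auxiliary quantities carrying the $(1-z/\mu_{\nu,c})^{3/2}$ singularity. For $\nu<\nu_\star$ one sits at a simple (square-root) branch point, the Jacobian of that system is non-degenerate at $c=1$, and by the implicit function theorem $\mu_{\nu,c}$---hence $\rF(\nu,c)$---is analytic in $c$ near $c=1$; this is exactly the regularity used above, and it fails precisely at $\nu=\nu_\star$, where two singularities coalesce (the exponent changing from $3/2$ to $4/3$) and the Jacobian degenerates. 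Then I would set $c=e^h$, expand every unknown in powers of $h$, and use invariance under $h\mapsto-h$ to see that only even powers occur: the order-$h^0$ equations reproduce the classical tetravalent Ising data at $c=1$ (in the spirit of \cite{Kazakov86,BDG_07}), and the order-$h^2$ equations form a linear system whose solution expresses $\partial_h^2\mu_{\nu,e^h}\big|_{h=0}$ through that data. Solving it and simplifying the resulting rational function of $\sqrt\nu$ will yield $\chi(\nu,1)=\tfrac{3\nu}{(2\sqrt\nu+1)(\sqrt\nu-2)^2}$. The asymptotics as $\nu\to\nu_\star^{-}$ then follows at once: $2\sqrt\nu+1\to5$, $3\nu\to12$, and $(\sqrt\nu-2)^2=\bigl(\tfrac14(\nu-4)+O((\nu-4)^2)\bigr)^2\sim(1-\nu/4)^2$, whence $\chi(\nu,1)\sim\tfrac{12}{5(1-\nu/4)^2}$ and $\gamma=2$.

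I expect the main obstacle to be twofold: (i) establishing the analyticity of $\mu_{\nu,c}$ at $c=1$ for $\nu<\nu_\star$, i.e. checking from the explicit shape of the singularity system that one genuinely sits at a simple branch point away from any confluence---this is where the hypothesis $\nu\neq\nu_\star$ is really used; and (ii) the algebra of the second-order perturbation, namely pushing the $h$-expansion of the uniformisation to order $h^2$ and collapsing the outcome to the compact closed form above, which is conceptually routine but where keeping the intermediate expressions tractable (best done in the uniformising variable rather than in $z$) is the practical difficulty. A possible alternative to the singularity analysis would be to observe that $n\,\chi_n(\nu,1)$ equals the Boltzmann average of $(\sigma_\oplus-\sigma_\ominus)^2$ over $\kT_n$ at $c=1$, identify this with the generating series of two-pointed tetravalent maps weighted by the spins at the marked vertices, and extract its singularity; but, beyond also requiring the comparison $\chi_n\to\chi$, this seems to demand comparable effort.
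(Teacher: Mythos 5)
Your proposal follows essentially the same route as the paper: both reduce $\chi(\nu,1)$ to the second-order behaviour in $c$ (equivalently in $h=\log c$) of the dominant singularity $\mu_{\nu,c}=c\rho_{\nu,c}$ at $c=1$, via $\rF(\nu,c)=-\log\mu_{\nu,c}$, and both extract that behaviour from the algebraic characterization of $\rho_{\nu,c}$ --- the paper through Newton--Puiseux expansions of the relevant root of the discriminant of the Lagrangian equation (\ref{equa: Definition of S}) (computed in the Maple companion, in the proofs of Lemma \ref{lemm : study of parameter S for c close to 1} and of the combined theorem), you through an order-$h^2$ perturbation of the characteristic system, which is the same computation in different clothing. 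The one genuine variation is your treatment of $\nu\ge\nu_\star$: you get the divergence from the spin-flip symmetry ($\rF$ even in $h$, so $M$ odd) combined with Theorems \ref{theo: Critical exponents - beta} and \ref{theo: Critical exponents - delta}, rather than from the half-integer (resp.\ divergent) Puiseux expansion of $\rho_{\nu,c}$ as in the paper; this is a clean, non-circular shortcut, since those theorems do not use the susceptibility formula. Two caveats on the finite regime $\nu<\nu_\star$: (i) the implicit function theorem gives you an analytic branch of the singularity system through the $c=1$ data, but identifying that branch with the actual radius of convergence $\mu_{\nu,c}$ still requires the continuity of $c\mapsto\mu_{\nu,c}$ (the paper's Lemma \ref{lemm : The radius of convergence are contiuous in the varaible c}), which your sketch omits; (ii) the closed form $\frac{3\nu}{(2\sqrt{\nu}+1)(\sqrt{\nu}-2)^2}$ is asserted as the outcome of the order-$h^2$ solve rather than derived --- acceptable in spirit, since the paper defers exactly this algebra to its Maple companion, but that is where the quantitative content of the theorem lies.
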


Note that all of the critical exponents we have computed verify the classical scaling relations (see for example \cite{DuminilCopin}) and agree with Boulatov and Kazakov \cite{KazakovBoulatov87}.

\subsection*{Organization of the paper}

We give precise definitions and prerequisites about planar maps and Ising model in Section \ref{sect: Preliminaries}. In Section \ref{sect: No external magnetic field}, we derive the asymptotic behavior of the coefficients of the partition function $\kZ$ when there is no external magnetic field. Finally, we study our model in the presence of an external magnetic field in Section \ref{sect: External magnetic field}, and prove our main results.

\subsection*{Acknowledgements}. We acknowledge support from ANR grant ProGraM (ANR-19-CE40-0025). We thank Marie Albenque and  Laurent Ménard for stimulating discussions  and their supervision.

%-----------------------------------------------------------------------------------------------------------------------

\section{Preliminaries}\label{sect: Preliminaries}
	\subsection{Planar maps with spins : definitions and generating series} \label{sect: Definitions}
A \textit{planar map} is the proper embedding of a finite connected graph in the sphere $\mathbb{S}^2$ considered up to orientation-preserving homeomorphisms. Its \textit{faces} are the connected components of the complementary of the map on the sphere, and its \textit{edges} and \textit{vertices} correspond to their analogue on the graph.  The sets of faces, edges and vertices of a planar map $\mathfrak{m}$ are denoted by $\rF(\mathfrak{m})$, $\rE(\mathfrak{m})$ and $\rV(\mathfrak{m})$ respectively. A planar map can be \textit{rooted}, meaning that one of its edges is distinguished and oriented. This edge is called the \textit{root edge}, and the vertex at its tail is called the \textit{root vertex}.  Finally, a planar map is said to be \textit{tetravalent} if all its vertices have degree 4. 

\bigskip
A planar map $\mathfrak{m}$ can be endowed with a \textit{spin configuration}, which means that it is associated with a mapping $\sigma : \rV(\mathfrak{m}) \mapsto \left\{\ominus,\oplus\right\}$. For such a map $(\mathfrak{m},\sigma)$, an edge $\left\{u,v\right\}$ of is said to be \textit{monochromatic} if $\sigma(u)=\sigma(v)$, and \textit{frustrated} otherwise, see Figure \ref{fig: A Planar map with spins}. The number of monochromatic edges of $(\mathfrak{m},\sigma)$ is denoted by $m(\mathfrak{m},\sigma)$, and the number of negative and positive spins are respectively denoted by $\sigma_\ominus$ and $\sigma_\oplus$. All the maps we consider in this article are tetravalent rooted planar maps endowed with a spin configuration.

\bigskip

The set of tetravalent rooted planar maps endowed with a spin configuration and whose root vertex carries a positive spin,  is denoted by $\mathcal{T}$. Its subset composed of the maps with $n\geq 1$ vertices is denoted by $\mathcal{T}_n$. For any $n\geq 1$, the partition function of the tetravalent planar maps in $\mathcal{T}_n$ is defined as the following finite sum:
$$\kZ_n(\nu,c) \coloneq \sum_{(\mathfrak{m},\sigma)\in\kT_n}\nu^{m(\mathfrak{m},\sigma)} c^{\sigma_\oplus-\sigma_\ominus},$$
from which, the generating function of the tetravalent planar maps in $\mathcal{T}$ is defined in $\Q(\nu,c)\llbracket z\rrbracket$ as follows:
$$\kZ(\nu,c,z) \coloneq \sum_{n\geq 1}\kZ_n(\nu,c)z^n= \sum_{(\mathfrak{m},\sigma)\in\kT}\nu^{m(\mathfrak{m},\sigma)} c^{\sigma_\oplus-\sigma_\ominus}z^{\vert \rV(\mathfrak{m})\vert}.$$

We will also consider a model of random tetravalent planar maps endowed with a spin configuration. For any $n\in\N$, and fixed $\nu>0$ and $c>0$, let $\pnuc_n$ be the probability distribution supported on the elements of $\kT_n$, and defined by:
$$\pnuc_n(\left\{(\mathfrak{m},\sigma)\right\})\coloneq\frac{\nu^{m(\mathfrak{m},\sigma)} c^{\sigma_\oplus-\sigma_\oplus}}{\kZ_n(\nu,c)}.$$
When the change of variables used to obtain expression (\ref{equa: Physical partition function}) is applied, we obtain the traditional Gibbs measure of the Ising model.
The magnetization and the susceptibility of the finite model that have been defined in Equations (\ref{equa: Definition Mn and Xn : physic}) can be interpreted through this probability as follows:
\begin{equation}\label{equa: Definition Mn and Xn : proba}
M_n(\nu,c)=\frac{1}{n}\hspace{0.05cm}\enuc_n\hspace{-0.1cm}\left(\sum_{v\in \rV(\mathfrak{m})}{\sigma(v)}\right), \hspace{0.4cm}
\chi_n(\nu,c)=\frac{1}{n}\vnuc_n\hspace{-0.1cm}\left(\sum_{v\in \rV(\mathfrak{m})}{\sigma(v)}\right),
\end{equation} 
where $(\mathfrak{m},\sigma)$ is sampled following $\pnuc_n$, and  the symbols  $\oplus$, and $\ominus$ are interpreted as $+1$ and  $-1$, respectively. This point of view will be useful later.

\begin{figure}[t]
  \centering
  \includegraphics[page=1, scale =0.9]{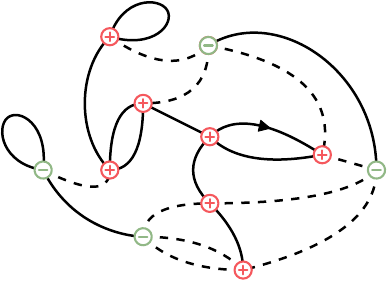}
  \caption{A tetravalent rooted planar map endowed with a spin configuration, whose weight is $\nu^{12}c^{3}z^{11}$ in the generating function $\kZ$. The frustrated edges are indicated with dotted lines.}
  \label{fig: A Planar map with spins}
\end{figure}

	\subsection{Planar maps with spins : Lagrangian parametrization}
	 
The results presented in this paper are based on a rigorous examination of the generating function $\kZ(\nu,c,z)$ of the tetravalent planar maps endowed with a spin configuration. To do so, we repeatedly use the following Lagrangian parametrization established in \cite{AMT-Bij}: 
	
\begin{theo}[Theorem 5.3, \cite{AMT-Bij}]\label{theo : Positivity generating function Ising tetra}
The generating function of the tetravalent planar maps in $\mathcal{T}$ can be expressed as follows:
\begin{equation}\label{equa: Parametrization of Z}
\kZ(\nu,c,cz)=\frac{ Pol_\kZ(S(\nu,c,z),\nu,c,z)}{9 z^2(1-\nu^2)(1+3c^2(1-\nu^2)S(\nu,c,z))}.
\end{equation}
where 
the series  $S\equiv S(\nu,c,z)$ is the unique formal power series in $\Q(\nu,c)\llbracket z\rrbracket$ with constant term 0 that satisfies the Lagrangian equation:
\small
\begin{equation}\label{equa: Definition of S}
z=\frac{S\left(1 - 3\nu^2(c^2 + 1)S- 3c^2(1-\nu^2)(3\nu^2 + 7)S^2 + 135c^4(1-\nu^2)^3S^4 - 243c^6(1-\nu^2)^5S^6 \right)}{\left(1-9c^2\left(1-\nu^2\right)^2S^2\right)^2}.
\end{equation}
\normalsize
and where the polynomial $Pol_\kZ(s,\nu,c,z) \in\mathbb{Q}\left[s,\nu,c,z\right]$ is defined as follows: 
\small
\begin{align*}
&Pol_\kZ(s,\nu,c,z) \\
&\quad\coloneq 405 c^{6} \left(1-\nu^{2}\right)^{4} s^{7}+351 c^{4} \left(1-\nu^{2}\right)^{3} s^{6}-27 c^{2} \left(1-\nu^{2}\right)^{2} \left(12 \left(1-\nu^{2}\right)^{2} c^{2} z +5 c^{2}-\nu^{2}\right) s^{5}\\
&\quad +3 c^{2} \left(1-\nu^{2}\right) \left(36 \left(1-\nu^{2}\right)^{2} c^{2} z -3 \nu^{2}-47\right) s^{4}+\left(252 \left(1-\nu^{2}\right)^{2} c^{2} z -\left(6 c^{2}+15\right) \nu^{2}-9 c^{2}\right) s^{3}\\
&\quad +\left(5-108 c^{2} \left(1-\nu^{2}\right)^{3} z^{2}+9 \left(1-\nu^{2}\right) \left(4 c^{2}+\nu^{2}\right) z \right) s^{2}-z \left(27 \left(1-\nu^{2}\right)^{2} c^{2} z -3 \nu^{2}+8\right) s\\
&\quad +3 z^{2} \left(1-\nu^{2}\right)
\end{align*}
\normalsize
Moreover, the series $S$ lies in $\mathbb{Z}\left[\nu,c\right]\llbracket z \rrbracket$ and has nonnegative coefficients. 
\end{theo}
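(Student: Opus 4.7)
The natural approach is bijective, since the reference \cite{AMT-Bij} is itself a bijective paper. I would first build an explicit bijection between tetravalent rooted planar maps with spin configurations (the family $\kT$) and a suitable family of decorated plane trees, in the spirit of the Bouttier--Di Francesco--Guitter mobile bijection extended to Ising decorations. Each degree-$4$ vertex of the map would correspond to an internal node of the tree; the spin of each vertex, and in particular the monochromatic vs.\ frustrated status of each incident edge, would be encoded as local decorations (colored buds/blossoms) on the node. Under such an encoding, the weight $\nu^{m(\mathfrak{m},\sigma)}c^{\sigma_\oplus-\sigma_\ominus}z^{|\rV(\mathfrak{m})|}$ decouples as a product of purely local weights on each node and each dangling half-edge, which is the shape needed for a Lagrangian equation.

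Second, I would define the auxiliary series $S(\nu,c,z)$ as the generating function of a pointed subfamily of these decorated trees: morally, a rooted tree with a distinguished bud. The Lagrangian equation \eqref{equa: Definition of S} should then be read as the combinatorial identity ``a pointed tree $=$ a bud attached to a root node together with three decorated subtrees,'' after summing over the spin of the root node and the $\{\ominus,\oplus\}$-coloring of the three other neighbours. The two independent parameters $\nu^2$ and $c^2$ arise because, once one sums over the spin of a given vertex, only even powers of $c$ and $\nu$ can appear in local contributions. The denominator $(1-9c^2(1-\nu^2)^2 S^2)^2$ I would expect to come from geometric resummations of chains of frustrated/monochromatic alternations whose signed weights factor as $(1-\nu^2)$.

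Third, I would extract the generating function $\kZ$ from the bijection by a root-face decomposition, expressing the total weight of a map as a product of subtree weights summed against a boundary contribution. Applying the Lagrangian inversion formula to \eqref{equa: Definition of S} yields the series $\kZ(\nu,c,cz)$ as a \emph{rational} function of $S$ and $z$; the expression \eqref{equa: Parametrization of Z} is then obtained by identifying the numerator with $Pol_\kZ$ and the denominator with the explicit factor $9z^2(1-\nu^2)(1+3c^2(1-\nu^2)S)$. A clean alternative, if the bijection proves elusive, is to use the quadratic/kernel method on the Tutte-type functional equation for maps with a boundary face (with a catalytic variable tracking the root-face degree and spin): the algebraic curve cut out by the resulting polynomial equation should be uniformizable by a single parameter, which turns out to be $S$.

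The hardest step is the bijection itself: one must find a decoration scheme that simultaneously (i) is a true bijection, (ii) tracks the spin excess $\sigma_\oplus-\sigma_\ominus$ via $c$, and (iii) tracks the number of monochromatic edges via $\nu$, while producing an equation of exactly the form \eqref{equa: Definition of S}. Matching the precise polynomial $Pol_\kZ$ would then be essentially bookkeeping. Finally, the positivity and integrality statement $S\in\Z[\nu,c]\llbracket z\rrbracket$ with nonnegative coefficients would follow automatically from the bijection, since in the encoding each coefficient counts decorated trees with nonnegative integer multiplicities; it can also be re-derived directly from \eqref{equa: Definition of S} by iterating the Lagrangian fixed-point equation and checking inductively that each iterate lies in $\Z_{\geq 0}[\nu,c]\llbracket z\rrbracket$.
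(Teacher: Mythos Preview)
The paper does not prove this theorem at all: it is quoted verbatim from \cite{AMT-Bij}, and the only ``proof'' given here is the change of variables
\[
\kZ(\nu,c,c\,t^2) = I(c^2,1,t,\nu,1),\qquad S(\nu,c,t^2)=Q(c^2,1,t,\nu,1),
\]
which translates the parametrization of \cite{AMT-Bij} (in terms of $x,y,t,\nu,u$) into the variables $(\nu,c,z)$ used in this paper. So there is nothing to compare your argument against in the present paper; the actual proof lives entirely in the cited reference.

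Your sketch is in the right spirit for what \cite{AMT-Bij} does (a bijective construction producing a Lagrangian equation for an auxiliary tree series, from which the map series is recovered rationally), but as written it is a strategy outline, not a proof. Several of the claimed structural features are guesses rather than derivations: you speculate about where the denominator $(1-9c^2(1-\nu^2)^2 S^2)^2$ comes from, you do not specify the decorated-tree family whose generating function is $S$, and you concede that ``the hardest step is the bijection itself'' without supplying it. The positivity/integrality assertion would indeed follow from a correct bijection, but your proposed fallback---iterating \eqref{equa: Definition of S} and checking inductively that iterates stay in $\Z_{\geq 0}[\nu,c]\llbracket z\rrbracket$---does not work directly, since the right-hand side of \eqref{equa: Definition of S} has minus signs and a nontrivial denominator, so nonnegativity is not manifest from the equation alone. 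In short: your plan points toward the genuine content of \cite{AMT-Bij}, but none of the substantive steps are carried out, and for the purposes of this paper the correct ``proof'' is simply the citation plus the change of variables above.
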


This result is a direct reformulation in our settings of Theorem 5.4 of \cite{AMT-Bij}. 
Both parametrization are linked by the following change of variables:
\begin{equation*}
	\kZ(\nu,c,c\,t^2) = I(c^2,1,t,\nu,1) \quad\text{and}\quad S(\nu,c,t^2)=Q(c^2,1,t,\nu,1).
\end{equation*}

In \cite{AMT-Bij}, $I(x,y,t,\nu,u)$ denotes the weighted generating function of the maps in $\mathcal{T}$, and the weight $w(\mathfrak{m},\sigma)$ of a map endowed with a spin configuration $(\mathfrak{m},\sigma)$, is defined in ${\mathbb{Q}\left[x,y, t,\nu,u\right]}$ as follows:
\begin{equation*}
	w(\mathfrak{m},\sigma) \coloneqq x^{\sigma_\oplus} y^{\sigma_\ominus} t^{\vert\mathrm{E}(\mathfrak{m})\vert} \nu^{m(\mathfrak{m},\sigma)} u^{\vert\mathrm{F}(\mathfrak{m})\vert}.
\end{equation*}
This change of variables is based on the observation that, in a tetravalent map, the number of edges is twice the number of vertices. Additionally, in \cite{AMT-Bij}, spin configurations are represented by coloring the vertices in black and white, with black and white vertices corresponding to $\oplus$ and $\ominus$ spins, respectively, in the present work.

Finally, we emphasize that both the nonnegativity of the coefficients of $S$, and the Lagrangian canceling equation it satisfies play a crucial role in several proofs in this article.

\bigskip

To end this paragraph, we provide the first terms of the power series $\kZ$. The tetravalent rooted planar maps endowed with a spin configuration which are counted by the two first terms are represented in Figure \ref{fig: Small tetravalent planar maps endowed with the Ising model}.
$$\kZ(\nu,c,z) =  2\nu^2cz + (9\nu^4c^2 + 8\nu^2 + 1)z^2 + 18(3\nu^6c^3 + 4\nu^4c + 2\nu^2c + 2\nu^4c^{-1} + \nu^2c^{-1})z^3 + O(z^3).$$

\begin{figure}[t]
  \centering
  \includegraphics[page=2, scale =0.63]{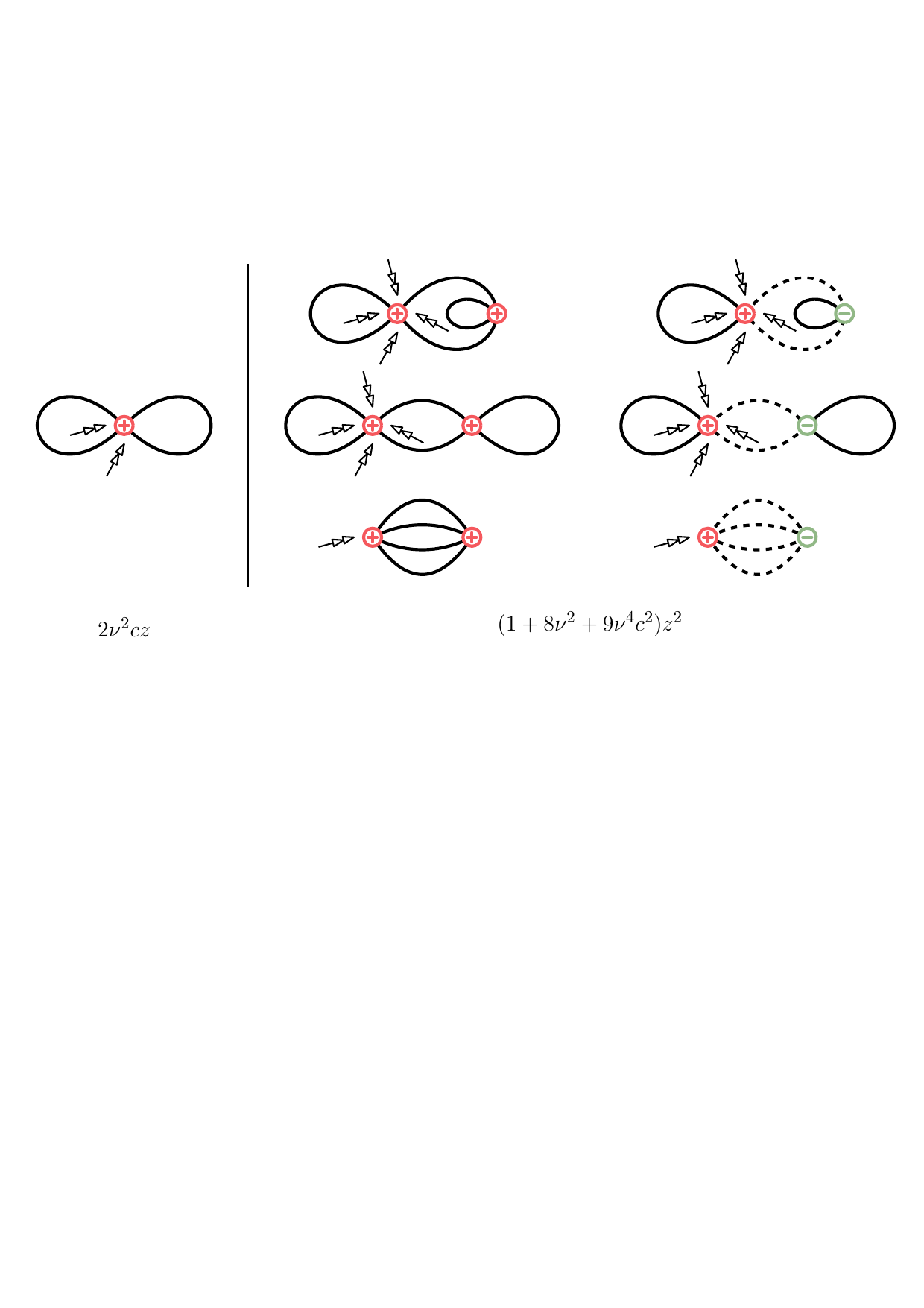}\;
  \caption{The unrooted tetravalent planar maps endowed with a spin configuration having one or two vertices. The possible rootings are given by the grey orientations. The frustrated edges are indicated by dotted lines. }
  \label{fig: Small tetravalent planar maps endowed with the Ising model}
\end{figure}

\section{Ising model without external magnetic field: \texorpdfstring{$c=1$}{c=1}}\label{sect: No external magnetic field}

The main goal of this section is to provide a complete study of the singularities of the power series $S$ and $\kZ$ when $c=1$, meaning that we consider the model with no external magnetic field. From now on, to lighten the notations we write $S(\nu,z)$ for $S(\nu,1,z)$ and $\kZ(\nu,z)$ for $\kZ(\nu,1,z)$. We begin with the study of the power series $S$.

\begin{lemm}\label{lemm : study of parameter S when c=1}
For any $\nu>0$, the radius of convergence $\rho_\nu$ of the power series $S(\nu,z)$ defined by (\ref{equa: Definition of S}) is finite and is its unique dominant singularity. It is explicitly given by:
\begin{equation}\label{equa: Expression of rho(nu,1)}
\rho_\nu =\begin{dcases}
 	\frac{2(1+2\sqrt{\nu})}{9(1+\sqrt{\nu})^2(1+\nu)^2} & \text{ for $\nu \in (0,\nu_\star)$, }\\[5pt]	
 	\frac{3\nu^2-8}{36(1-\nu^2)^2} & \text{ for $\nu \geq \nu_\star$,}
\end{dcases}
\end{equation} 
where $\nu_\star=4$. Moreover, the power series $S(\nu,z)$ converges at $\rho_\nu$ and there exists a positive explicit constant $\aleph(\nu)$ such that:
\begin{equation*}
S(\nu,z) =\begin{dcases}
 	S(\nu,\rho_\nu)- \aleph(\nu) \cdot(1-z/\rho_\nu)^{1/2}+o\big((1-z/\rho_\nu)^{1/2}\big) & \text{ for $\nu \neq \nu_\star$, }\\
 	S(\nu,\rho_{\nu_\star}) - \aleph(\nu_\star)\cdot(1-z/\rho_{\nu_\star})^{1/3}+o\big((1-z/\rho_{\nu_\star})^{1/3}\big) & \text{ for $\nu = \nu_\star$,}
\end{dcases}
\end{equation*} 
when $z\rightarrow\rho_\nu\hspace{0.03cm}^{-}$. The quantity $S(\nu,\rho_\nu)$ is given by:
\begin{equation}\label{equa: Expression of S(nu,1,rho)}
S(\nu,\rho_\nu) =\begin{dcases*}
 	\frac{1}{3(\sqrt{\nu}+1)(\nu+1)} & for $\nu\in(0,\nu_\star)$,\\
 	\frac{1}{3(\nu^2-1)} &  for $\nu\geq \nu_\star$.
\end{dcases*}
\end{equation}
\end{lemm}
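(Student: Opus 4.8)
The plan is to analyze the Lagrangian equation \eqref{equa: Definition of S} via the standard smooth implicit-function/singularity analysis for inverse functions (as in Flajolet--Sedgewick, the "inverse of an analytic function" / Lagrangian singularity framework). Write the equation as $z = \phi(S)$, where
$$\phi(s) = \frac{s\left(1 - 3\nu^2(c^2+1)s - \cdots\right)}{\left(1 - 9c^2(1-\nu^2)^2 s^2\right)^2}$$
specialized at $c=1$, so that
$$\phi(s) = \frac{s\,N(s)}{(1-9(1-\nu^2)^2 s^2)^2}, \qquad N(s) := 1 - 6\nu^2 s - 3(1-\nu^2)(3\nu^2+7)s^2 + 135(1-\nu^2)^3 s^4 - 243(1-\nu^2)^5 s^6.$$
Since $S$ has nonnegative coefficients (Theorem~\ref{theo : Positivity generating function Ising tetra}), $S(\nu,z)$ is increasing on $[0,\rho_\nu)$, and its radius of convergence is governed either by a branch point of the inverse function --- the smallest positive $\tau$ with $\phi'(\tau)=0$, giving a square-root singularity with $\rho_\nu = \phi(\tau)$ --- or by the first pole of $\phi$, i.e.\ the smallest positive zero $s_0 = \frac{1}{3(1-\nu^2)}$ (for $\nu<1$; note $|1-\nu^2|$ for $\nu>1$) of the denominator, at which $\phi$ blows up. The heart of the proof is thus to compute $\phi'$, locate its smallest positive root $\tau(\nu)$, compare $\tau(\nu)$ with $s_0$, and check that the transition between "$\tau<s_0$" (branch point regime) and "$\tau = s_0$ or $\tau$ disappears" (different regime) happens exactly at $\nu_\star=4$.

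\textbf{Step 1 (the generic square-root regime).} Compute $\phi'(s)$; one finds a rational function whose numerator factors (this is where the explicit algebra lives, and the factorization is presumably engineered so that a nice root appears). I expect that $\phi'(s)=0$ has the smallest positive root $\tau_\nu$ for which $S(\nu,\rho_\nu)=\tau_\nu$ matches \eqref{equa: Expression of S(nu,1,rho)}: namely $\tau_\nu = \frac{1}{3(\sqrt\nu+1)(\nu+1)}$ when $\nu<\nu_\star$. Substituting back, $\rho_\nu = \phi(\tau_\nu)$ should simplify to the first line of \eqref{equa: Expression of rho(nu,1)}. Standard singularity analysis for the functional inverse (Flajolet--Sedgewick VI.7, the case $\phi'(\tau)=0$, $\phi''(\tau)\neq 0$) then gives
$$S(\nu,z) = \tau_\nu - \sqrt{\tfrac{2\phi(\tau_\nu)}{\phi''(\tau_\nu)}}\,(1-z/\rho_\nu)^{1/2} + o\big((1-z/\rho_\nu)^{1/2}\big),$$
so $\aleph(\nu) = \sqrt{2\phi(\tau_\nu)/\phi''(\tau_\nu)}$, which one checks is positive (since $\phi$ is increasing up to its max and $\phi''(\tau_\nu)<0$, the ratio under the root is positive --- sign bookkeeping needed). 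To confirm $\rho_\nu$ is the \emph{unique} dominant singularity, note $\phi$ has nonnegative Taylor coefficients so Pringsheim applies, and one checks $\phi$ is analytic and $\phi'\neq 0$ on $(0,\tau_\nu)$ and that no other singularity lies on $|z|=\rho_\nu$ (aperiodicity of $S$, visible from the first terms $2\nu^2 z + \cdots$).

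\textbf{Step 2 (the regime $\nu \geq \nu_\star$ and the threshold).} For $\nu\geq 4$ I expect that either the branch point $\tau_\nu$ collides with the pole $s_0 = \frac{1}{3(\nu^2-1)}$ of $\phi$, or that $\phi'$ has no positive zero before $s_0$, so that the dominant singularity is instead the pole, but --- crucially --- $\phi$ together with its relevant derivative vanish there in a way producing a \emph{cube-root} rather than a pole-type singularity. The candidate $S(\nu,\rho_\nu)=\frac{1}{3(\nu^2-1)}=s_0$ in the second line of \eqref{equa: Expression of S(nu,1,rho)} strongly suggests the latter: at $s=s_0$ the denominator $(1-9(1-\nu^2)^2s^2)^2$ vanishes to order $2$, and if the numerator $sN(s)$ also vanishes there (to be verified: $N(s_0)=0$ for $\nu\geq 4$? more likely $N(s_0)\neq0$ and instead $\phi$ has an inverse-cube-root behavior because $z-\rho_\nu \sim \mathrm{const}\cdot (s-s_0)$ fails and a higher contact occurs). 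The precise mechanism: near $s_0$, writing $z = \rho_\nu - a(s_0-s)^3 + \cdots$ would force $S(\nu,z) = s_0 - (a)^{-1/3}(\rho_\nu-z)^{1/3}+\cdots$; matching $\rho_\nu = \frac{3\nu^2-8}{36(1-\nu^2)^2}$ then pins down everything. One must (i) verify that this cube-contact genuinely occurs for all $\nu\geq 4$, (ii) verify that for $\nu<4$ it does not (the branch point $\tau_\nu$ is reached strictly before $s_0$), and (iii) check continuity/coincidence of the two formulas at $\nu=4$: indeed $\frac{3\cdot16-8}{36\cdot225} = \frac{40}{8100}=\frac{2}{405}$ and $\frac{2(1+4)}{9\cdot25\cdot25} = \frac{10}{5625}=\frac{2}{1125}$ --- these must be double-checked and reconciled (one of them or my arithmetic is off, and sorting this out is part of the work). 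Then apply the appropriate version of singularity analysis for functional inverses with $\phi'(\tau)=\phi''(\tau)=0$, $\phi'''(\tau)\neq 0$ (giving exponent $1/3$), again confirming uniqueness of the dominant singularity via Pringsheim and aperiodicity.

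\textbf{Main obstacle.} The genuinely delicate part is Step 2: showing that exactly at $\nu_\star=4$ the nature of the dominant singularity of $S$ switches from a square root to a cube root, and identifying which feature of $\phi$ causes it (coincidence of the branch point with the pole, versus a degenerate higher-order critical point of $\phi$). This requires carefully tracking, as $\nu$ increases, the smallest positive root of $\phi'$ against the pole location $s_0(\nu)$, and verifying the order of vanishing of $\phi-\rho_\nu$ at the transition --- the algebra is heavy and sign- and branch-sensitive, and one must be sure no spurious earlier singularity (e.g.\ a second pole, or a zero of $\phi'$ one has overlooked) intervenes. The explicit formulas for $\rho_\nu$, $S(\nu,\rho_\nu)$ and $\aleph(\nu)$ then fall out of back-substitution, but only once the correct critical scenario is identified and the arithmetic is reconciled across the two regimes.
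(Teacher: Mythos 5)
Your Step 1 follows essentially the same skeleton as the paper (characteristic equation $\phi(S)-S\phi'(S)=0$ for the Lagrangian scheme, back-substitution into \eqref{equa: Definition of S} to get $\rho_\nu$, square-root expansion), but your Step 2 is built on a wrong guess about the mechanism at $\nu\geq 4$, and it aims at the wrong exponent. The lemma asserts a \emph{square-root} singularity for every $\nu\neq\nu_\star$, including all $\nu>4$; the exponent $1/3$ occurs only at $\nu=\nu_\star=4$. Your plan of "verifying that the cube contact genuinely occurs for all $\nu\geq4$" would therefore fail. The actual structure is different from the pole scenario you sketch: at $s_0=\tfrac{1}{3(\nu^2-1)}$ the numerator $N$ of the Lagrangian equation has a \emph{double} zero ($N(s_0)=N'(s_0)=0$, while $N''(s_0)=6(\nu^2-1)(3\nu^2-8)\neq0$ for $\nu>\sqrt{8/3}$), which exactly cancels the double zero of $(1-9(\nu^2-1)^2s^2)^2$ coming from the factor $(1-3(\nu^2-1)s)^2$. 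Hence $z$ as a function of $S$ extends analytically through $s_0$, takes there the value $\tfrac{3\nu^2-8}{36(\nu^2-1)^2}=\rho_\nu$, and has an ordinary quadratic critical point for $\nu>4$ (square root) which degenerates to a cubic one precisely at $\nu=4$ (cube root). Incidentally, the "discrepancy" you flag at $\nu=4$ is your own arithmetic: $(1+\sqrt{4})^2=9$, and both formulas in \eqref{equa: Expression of rho(nu,1)} give $\rho_4=\tfrac{2}{405}$, so continuity at the threshold is not an issue.

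The second genuine gap is the uniqueness of the dominant singularity. You invoke "Pringsheim plus aperiodicity", but Pringsheim only shows that $\rho_\nu$ \emph{is} a singularity, and the positive smooth inverse-function schema of Flajolet--Sedgewick does not apply because your $\phi$ does \emph{not} have nonnegative coefficients (it contains $-6\nu^2 s$, etc.); likewise the Daffodil bound $|S(\nu,z)|<S(\nu,\rho_\nu)$ on the circle does not by itself show that the other roots of the discriminant of the same modulus are regular points of the particular branch $S$. This is where the bulk of the paper's proof lives: the discriminant of the cancelling polynomial factors as $P_1^3P_2P_3$, one identifies the exceptional values of $\nu$ (such as $1/3$, $\tfrac{4(\pm4+\sqrt{34})^2}{9}$, the real root of $9\nu^3-256\nu^2-624\nu-384$) at which another root shares the modulus $\rho_\nu$, and at each such point one computes the Puiseux expansions of all branches, discards those whose constant term exceeds $S(\nu,\rho_\nu)$ in modulus, and checks by Newton's polygon that the remaining branch (which must be $S$) is regular. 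Your proposal has no substitute for this verification, nor for the analogous branch-identification needed to justify the singular expansions themselves (the paper gets them from Newton's polygon applied to a shifted cancelling polynomial, precisely because positivity of $\phi$ fails and because of the degeneration at $\nu_\star$).
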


\begin{proof}
All computations performed in this proof are detailed in the Maple companion \cite{Maple}. We first prove that $\rho_\nu$ is a singularity of $z\mapsto S(\nu,z)$, and then we calculate its explicit formula. For any $\nu>0$, the power series $S(\nu,z)$ has nonnegative coefficients (see Theorem \ref{theo : Positivity generating function Ising tetra}), so it is singular at its radius of convergence by Pringsheim's theorem (see \cite[Theorem IV.6, p.240]{FS}). To obtain an expression of $\rho_\nu$, we start by computing the quantity $S(\nu,\rho_\nu)$. It is clear that there exists a rational function $\phi(s)\coloneq \phi(\nu,c,s) \in \mathbb{Q}[\nu,c](s)$ such that (\ref{equa: Definition of S}) can be written as the following Lagrangean expression:
$$S=z\cdot\phi(S).$$ 
Furthermore, it is classical that $S(\nu,\rho_\nu)$ is the smallest real positive solution of the characteristic equation (see \cite[Definition VII.3, p.453]{FS}):
\begin{equation}
\phi(S)-S\phi'(S)=0
\end{equation}
Among all the solutions of the previous display, four are real. We obtain the expression of $S(\nu,\rho_\nu)$ given in System (\ref{equa: Expression of S(nu,1,rho)}) by identifying the smallest nonnegative one. First, note that for all $\nu>0$, we have that $S(\nu,\rho_\nu)\leq \frac{1}{3|\nu^2-1|}$. Secondly, observe that by plugging this explicit value of  $S(\nu,\rho_\nu)$ into Equation (\ref{equa: Definition of S}) we obtain the expression of $\rho_\nu$ announced in the statement. We immediately get that $\nu\in(0,\infty)\mapsto\rho_\nu\in(0,\infty)$ is a bounded continuous and decreasing function.

\bigskip

We now turn to the proof that $S(\nu,z)$ has a unique dominant singularity. The singularities of $S(\nu,z)$ are among the roots of the discriminant of any of its irreducible cancelling polynomial. Equation (\ref{equa: Definition of S}) provides such a polynomial (see the Maple companion \cite{Maple}). Its discriminant factorizes into a constant times a product of polynomials $P_1^3P_2P_3$, where
\begin{align}
P_1 (\nu,z) &= 36(\nu^2 - 1)^2z - 3\nu^2 + 8, \notag\\
P_2 (\nu,z) &= 81(\nu^2 - 1)^2(\nu + 1)^2z^2 + 36(3\nu - 1)(\nu + 1)^2z - 16\nu + 4, \notag\\
P_3 (\nu,z) &= P_2(-\nu,z) \notag.
\end{align}
Observe that $\rho_\nu$ is a root of $P_2$ when $\nu\in (0,\nu_\star]$, and a root of $P_1$ when $\nu>\nu_\star$. The other possible dominant singularities of S are among the roots of those polynomials that are different from $\rho_\nu$ but have the same modulus. In the following paragraphs, we prove that such roots are never singularities of $S$. For this task, and repeatedly thereafter, we use the following methodology (see \mbox{\cite[p.495-505]{FS}}):

\begin{framed}
\textbf{Methodology.} To study the behavior of an algebraic complex function $z\mapsto A(z)$ at a point $\widetilde{z}$ within its domain of definition, one can proceed as follows:
\begin{enumerate}[label=A.]
	\item First, compute an irreducible polynomial $\mathrm{Pol}(Z,Y)\in\left(\mathbb{C}[Z]\right)[Y]$ such that $$\mathrm{Pol}(\widetilde{z}-z, A(z))=0.$$
\end{enumerate}
The roots of $\mathrm{Pol}(Z,Y)$ with respect to the variable $Y$, are Puiseux series $y(Z)\in\mathbb{C}\llangle Z \rrangle$ that are convergent in a neighborhood of $Z=0$ (possibly with $0$ excluded). Thus, among those that verify the initial condition $y(0)=A(\widetilde{z})$, one corresponds to $A$. Note that sometimes we do not have direct access to an explicit expression for $\widetilde{z}$ and $A(\widetilde{z})$. To overcome this, one can use cancelling polynomials of these quantities and resultants, first to construct the polynomial $\mathrm{Pol}$ and then to verify the condition $y(0)=A(\widetilde{z})$.
\begin{enumerate}[label=B.]
	\item Then, for each solution $y(Z)$ of $\mathrm{Pol}(Z,y(Z))=0$, compute the first terms of their Puiseux expansion at $Z=0$, up to a sufficiently high order $K$ so they all have a distinct coefficient from the others.  To do so, one can apply Newton's polygon method on the polynomial $\mathrm{Pol}$.
\end{enumerate}
Let consider such a solution $y(Z)$, and designate the first terms of its Puiseux expansion up to $K$ by the following expression   $$y(Z)=y_{k_0}Z^{k_0/\kappa} + y_{k_0+1}Z^{(k_0+1)/\kappa} + \cdots + y_{K}Z^{K/\kappa} + {o}(Z^{K/\kappa}),$$ with $k_0\leq K$ in $\mathbb{Z}$ and $\kappa\geq 1$. If all the exponents of $Z$ up to $K/\kappa$, namely ${k_0}/{\kappa},\cdots,{K}/{\kappa}$, are nonnegative integers, then one can prove that $y(Z)$ is a formal power series in $Z$. Otherwise, $y(Z)$ admits a singularity at $Z=0$. Two cases are of interest to us:
\begin{enumerate}[label=$\text{C}_{1}$.]
	\item If one can distinguish which solution $y(Z)$ of $\mathrm{Pol}(Z,y(Z))=0$ corresponds to the function of interest $A$, then one obtain an explicit asymptotic behavior of $A(z)$ at the point $\widetilde{z}$. 
\end{enumerate}
\begin{enumerate}[label=$\text{C}_{2}$.]	
	\item If one can distinguish a subfamily of solutions of $\mathrm{Pol}(Z,y(Z))=0$ containing $A(z)$, and such that all of them admit the same asymptotic behavior (either no singularity, or a singularity with the same exponent), then one can conclude that $A(z)$ has this same asymptotic behavior at  the point  $\widetilde{z}$. 
\end{enumerate}
\end{framed}

We now apply this methodology to the roots of $P_1$, $P_2$ and $P_3$ starting with the two conjugate roots of $P_3$. Denote by $z_3$ any of them. For $0<\nu\leq \nu_\star$, the complex number $z_3$ have a modulus greater than $\rho_\nu$, so it can not be a dominant singularity of $S$. For $\nu> \nu_\star$, with a direct computation we get that $z_3$ has the same modulus than $\rho_\nu$ if and only if $\nu$ is a root of $$9\nu^3 - 256\nu^2 - 624\nu - 384.$$ This polynomial has a unique real root, which we denote by $\widetilde{\nu}$. Thus, $z_3(\widetilde{\nu})$ is a possible dominant singularity of $S$. To settle this case, we study the asymptotic behavior of $S(\widetilde{\nu},z)$ at $z=z_3(\widetilde{\nu})$ using the methodology described above. First, let us compute a cancelling polynomial for the power series $S$. It is clear that there exists a non-zero polynomial $\mathrm{Pol_1}(V,Z,Y)\in\Q[V,Z,Y]$ such that  (\ref{equa: Definition of S}) can be written as 
$$\mathrm{Pol}_1(\nu,z,S(\nu,z))=0.$$
Then, defining $\mathrm{Pol_2}\in\mathbb{C}[Z,Y]$ as $\mathrm{Pol_2}(Z,Y)=\mathrm{Pol}_1(\widetilde{\nu},z_3(\widetilde{\nu})-Z,Y)$, we obtain the following equality 
$$\mathrm{Pol_2}(z_3(\widetilde{\nu})-z,S(\widetilde{\nu},z))=0.$$
The polynomial $\mathrm{Pol_2}(Z,Y)$ has degree $6$ with respect to the variable $Y$. We compute for each of its root the first term of their Puiseux expansion at $Z=0$. All these expansions except one have a constant coefficient with modulus greater than $S(\widetilde{\nu},\rho_{\widetilde{\nu}})$. Since, the coefficients of $S$ are positive, we know that $|S(\nu,z)|\leq S(\nu,\rho_\nu)$ for any $|z|\leq\rho_\nu$ and $\nu>0$, so that these expansions cannot correspond to $S(\nu,z)$ around $\rho_{\widetilde{\nu}}$. We can restrict our attention to the last expansion, and using Newton's polygon method, we prove that it is not singular at $Z=0$. Thus, the roots of $P_3$ are not singularities of $S$. 

\bigskip

We now move on the roots of $P_2$. For $0<\nu\leq\nu_\star$, the radius of convergence $\rho_\nu$ is the positive root of $P_2$. By direct calculations, we know that its other root is on the circle of convergence of $S$ if and only if it is equal to $-\rho_\nu$, and that this happens only for $\nu={1/3}$. Once again, we prove that $-\rho_{{1/3}}$ is not a singularity of $S({1/3},z)$. First, we define $\mathrm{Pol_3}\in\mathbb{C}[Z,Y]$ as $\mathrm{Pol_3}(Z,Y)=\mathrm{Pol}_1({1/3},-\rho_{1/3}-Z,Y)$, so we get that $$\mathrm{Pol_3}(-\rho_{1/3}-z,S({1/3},z))=0.$$ 
Exactly as previously the polynomial $\mathrm{Pol_3}(Z,Y)$ has degree $6$ with respect to the variable $Y$. We compute for each of its root the first term of their Puiseux expansion at $Z=0$ and we observe that except for one of them, they all have a constant coefficient with modulus greater than $S(\widetilde{\nu},\rho_{\widetilde{\nu}})$. The remaining root corresponds to $S$ and it is not singular at $Z=0$. Thus, the complex number $-\rho_{{1/3}}$ is not a singularity of $S({1/3},z)$. 

For $\nu>\nu_\star$, the radius of convergence $\rho_\nu$ is the unique root of $P_1$. The positive root of $P_2$ is smaller that $\rho_\nu$ and we check that its negative root is equal to  $-\rho_\nu$ only for $\nu=\frac{4(4+\sqrt(34))^2}{9}$. We conclude that it is not a singularity of $S$ as before.

\bigskip

We end with the root of $P_1$. For $\nu\geq\nu_\star$, this root is equal to the radius of convergence. For $0<\nu<\nu_\star$ it lies on the circle of convergence if and only if it is equal to $-\rho_\nu$. Exactly as before, this happens at a specific value of $\nu$, here $\frac{4(-4+\sqrt(34))^2}{9}$. We conclude with similar arguments.

This concludes the proof that $S(\nu,\rho_\nu)$ has a unique dominant singularity for all $\nu>0$. 

\bigskip

To finish, we have to prove the asymptotic behavior of $S(\nu,z)$ announced in the lemma. To do this, we apply the methodology one more time. Fix $\nu>0$. The real numbers  $\rho_{\nu}$ and $S(\nu,\rho_\nu)$ have been explicitly computed before so we can define $\mathrm{Pol_4}(Z,Y)=\mathrm{Pol}_1(\nu,\rho_\nu-Z,S(\nu,\rho_{\nu})-Y)$ in $\R[Z,Y]$, which satisfies
$$\mathrm{Pol_4}(\rho_\nu-z,S(\nu,\rho_{\nu})-S(\nu,z))=0.$$
The polynomial $\mathrm{Pol_4}(Z,Y)$ factorizes in six terms but only one vanishes at $Z=Y=0$. Thus, we need to investigate the roots of this factor. We apply Newton's polygon method to compute  the singularity behavior of  its roots at $Z=0$. For $\nu\neq\nu_\star$, they all have a square root singularity. On the other hand, when $\nu=\nu_\star$ some coefficients of $\mathrm{Pol_4}$ vanishes and all the roots of interest have a singularity with exponent ${1/3}$ as expected. In each case, all the roots have the same constant coefficient in their Puiseux asymptotic expansion. Thus, we conclude that $S(\nu,z)$ have the same asymptotic behavior at $z=\rho_{\nu}$, and obviously the same constant term as them. 
\end{proof}

We derive the asymptotic behavior of the generating function $\kZ$  from the singular behavior of $S$:

\begin{coro}\label{coro : Asymptotic expansion of I when c=1}
The power series $\kZ(\nu,z)$ has a unique dominant singularity at  $\rho_\nu$, and it converges at this point. Moreover, there exist two non-zero explicit constants $\beth(\nu)$ and $\daleth(\nu)$ such that the power series $\kZ(\nu,z)$ has the following singular behavior at $\rho_\nu$ when $\nu \neq \nu_\star$:
$$\kZ(\nu,z) =
 	\kZ(\nu,\rho_\nu)+ \beth(\nu) \cdot(1-z/\rho_\nu) + \daleth(\nu)\cdot (1-z/\rho_\nu)^{3/2} +o\big((1-z/\rho_\nu)^{3/2}\big), $$
and the following one when $\nu = \nu_\star$:
$$\kZ(\nu,z) = \kZ(\nu_\star,\rho_{\nu_\star})+ \beth(\nu_\star)\cdot (1-z/\rho_{\nu_\star}) + \daleth(\nu_\star)\cdot (1-z/\rho_{\nu_\star})^{4/3} +o\big((1-z/\rho_{\nu_\star})^{4/3}\big).$$

\end{coro}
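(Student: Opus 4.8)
The plan is to transfer the singular analysis of Lemma~\ref{lemm : study of parameter S when c=1} to $\kZ$ through the parametrization (\ref{equa: Parametrization of Z}). Setting $c=1$ there gives $\kZ(\nu,z)=G\bigl(S(\nu,z),z\bigr)$ with
$$G(s,z)=\frac{Pol_\kZ(s,\nu,1,z)}{9\,z^{2}(1-\nu^{2})\bigl(1+3(1-\nu^{2})s\bigr)}\in\Q(\nu)(s,z),$$
and, writing equation (\ref{equa: Definition of S}) at $c=1$ as $z=\psi\bigl(S(\nu,z)\bigr)$ for an explicit $\psi\in\Q(\nu)(s)$ (so that $\psi'(\tau)=0$ by the characteristic equation from the proof of Lemma~\ref{lemm : study of parameter S when c=1}, where $\tau\coloneq S(\nu,\rho_\nu)$), one may substitute to obtain $\kZ(\nu,z)=H\bigl(S(\nu,z)\bigr)$ with $H\coloneq G(\,\cdot\,,\psi(\,\cdot\,))\in\Q(\nu)(s)$. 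The first step is to check that, for every $\nu\ne 1$, $H$ is analytic on $\{|s|<\tau\}$ as well as on a neighbourhood of $\tau$. Using the closed form (\ref{equa: Expression of S(nu,1,rho)}) of $\tau$ this is a finite verification (\cite{Maple}): for $\nu\in(0,\nu_\star)$ one computes $1+3(1-\nu^{2})\tau=2-\sqrt\nu\ne0$ and $1-9(1-\nu^{2})^{2}\tau^{2}=\sqrt\nu\,(2-\sqrt\nu)\ne0$, so the relevant poles of $G$ and of $\psi$ in the variable $s$ lie strictly farther from $0$ than $\tau$ and $H$ is manifestly analytic there; for $\nu\ge\nu_\star$ both of these quantities vanish, so $s=\tau$ is a pole of $G$ and of $\psi$, but the numerator $Pol_\kZ$ and the numerator of $\psi$ vanish at $s=\tau$ to the right order (a polynomial divisibility check), and the singularities cancel. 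The value $\nu=1$, where $1-\nu^{2}=0$ makes (\ref{equa: Parametrization of Z}) a removable indeterminacy, is treated separately (or by continuity in $\nu$).

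Granting this, the expansion of $\kZ$ at $\rho_\nu$ will follow by composition: substitute the singular expansion of $S(\nu,z)$ at $\rho_\nu$ from Lemma~\ref{lemm : study of parameter S when c=1} --- continued one or two further terms, at no extra cost, by the same Newton-polygon computation --- into the Taylor expansion $\kZ(\nu,z)=H(\tau)+H'(\tau)\bigl(S(\nu,z)-\tau\bigr)+\tfrac12 H''(\tau)\bigl(S(\nu,z)-\tau\bigr)^{2}+\tfrac16 H'''(\tau)\bigl(S(\nu,z)-\tau\bigr)^{3}+\cdots$, and collect powers of $(1-z/\rho_\nu)^{1/2}$ (resp.\ of $(1-z/\rho_{\nu_\star})^{1/3}$ when $\nu=\nu_\star$). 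This already shows that $\kZ(\nu,z)$ converges at $\rho_\nu$ with value $H(\tau)$, and that $\rho_\nu$ is its unique dominant singularity: on $\{|z|\le\rho_\nu\}\setminus\{\rho_\nu\}$ one has $|S(\nu,z)|<\tau$ (by positivity of the coefficients of $S$, as in the proof of the Lemma), so $S(\nu,z)$ lies in the region where $H$ is analytic, while $z\mapsto S(\nu,z)$ is analytic there and has $\rho_\nu$ as its unique dominant singularity.

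The decisive point, which I expect to be the main obstacle, is that the leading would-be singular term vanishes. For $\nu\ne\nu_\star$ the coefficient of $(1-z/\rho_\nu)^{1/2}$ produced by the composition is $-\aleph(\nu)\,H'(\tau)$; for $\nu=\nu_\star$ the coefficients of $(1-z/\rho_{\nu_\star})^{1/3}$ and $(1-z/\rho_{\nu_\star})^{2/3}$ are $-\aleph(\nu_\star)\,H'(\tau)$ and $\tfrac12\aleph(\nu_\star)^{2}\,H''(\tau)$. The corollary is therefore equivalent to the assertion that $H(s)-\kZ(\nu,\rho_\nu)$ vanishes at $s=\tau$ to order exactly $2$ when $\nu\ne\nu_\star$ and to order exactly $3$ when $\nu=\nu_\star$, together with the non-vanishing of the coefficient $\daleth(\nu)$ obtained in the composition. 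Granting this, composing with $S(\nu,z)-\tau\sim-\aleph(\nu)(1-z/\rho_\nu)^{1/2}$ (resp.\ $\sim-\aleph(\nu_\star)(1-z/\rho_{\nu_\star})^{1/3}$) yields the two announced expansions, with $(1-z/\rho_\nu)$ and $(1-z/\rho_\nu)^{3/2}$ (resp.\ $(1-z/\rho_{\nu_\star})$ and $(1-z/\rho_{\nu_\star})^{4/3}$) as the first two terms, and with $\beth(\nu),\daleth(\nu)$ read off from $H''(\tau),H'''(\tau)$ (resp.\ $H'''(\tau),H^{(4)}(\tau)$) and the subdominant coefficients of $S$. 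This order-of-vanishing identity is not a formal consequence of the composition; it is precisely the mechanism that lifts the critical exponent of $\kZ$ above that of $S$, the analogue of the classical fact that the generating function of rooted planar maps has a $(1-z/\rho)^{3/2}$-type singularity while the associated tree-like series has a $(1-z/\rho)^{1/2}$-type one (and a reflection of the quadratic-method origin of $\kZ$). Since $\tau$, $\rho_\nu$ and $H$ are all explicit, $H'(\tau)$, $H''(\tau)\big|_{\nu=\nu_\star}$ and the relevant higher Taylor coefficients are explicit functions of $\nu$, and the required vanishings and non-vanishings will be verified in the Maple companion \cite{Maple}. Alternatively, one could apply the methodology of the proof of Lemma~\ref{lemm : study of parameter S when c=1} verbatim to a cancelling polynomial of $\kZ$ obtained by eliminating $S$ between (\ref{equa: Parametrization of Z}) and (\ref{equa: Definition of S}), reading the absence of a $(1-z/\rho_\nu)^{1/2}$ term directly off the Puiseux expansion of the branch corresponding to $\kZ$.
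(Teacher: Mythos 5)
Your proposal is correct and follows essentially the same route as the paper: the paper likewise deduces everything from the parametrization (\ref{equa: Parametrization of Z}), using the positivity of the coefficients of $S$ together with the Daffodil lemma (aperiodicity) to get $|S(\nu,z)|<S(\nu,\rho_\nu)$ away from $\rho_\nu$ --- so that the singularities of $\kZ$ are exactly those of $S$ --- and then plugs an order-two Newton-polygon expansion of $S$ into the parametrization, the cancellation of the $(1-z/\rho_\nu)^{1/2}$ term and the non-vanishing of $\beth(\nu)$ and $\daleth(\nu)$ being verified in the Maple companion. Your reformulation via $H(s)=G(s,\psi(s))$ and the vanishing orders $H'(\tau)=0$ (and $H''(\tau)=0$ at $\nu_\star$) is only a repackaging of that same computation; just note that the strict bound $|S(\nu,z)|<\tau$ on the closed disc minus $\{\rho_\nu\}$ requires aperiodicity via the Daffodil lemma, not positivity alone.
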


\begin{proof}
The properties of $\kZ(\nu,z)$ stated here derived from those of $S(\nu,z)$. First, the proof of Lemma \ref{lemm : study of parameter S when c=1} implies that $|S(\nu,z)|\leq \frac{1}{3(\nu^2-1)}$ for all $|z|\leq\rho_\nu$ and all $\nu>0$, with possible equality only if $z=\rho_\nu$, by the aperiodicity of $S$ and the Daffodil lemma (see \cite[Lemma IV.1, p.266]{FS}). Therefore, the series $S$ and $\frac{1}{(1+3(1-v^2)S)}$ share the same unique dominant singularity $\rho_\nu$. Consequently, the parametrization of $\kZ(\nu,z)$ given in (\ref{equa: Parametrization of Z}) implies that the singularities of the series $\kZ(\nu,z)$ are exactly those of $S(\nu,z)$.

\bigskip

We compute the singular behaviour of $\kZ(\nu,z)$ at $\rho_\nu$ by plugging in an explicit singular expression of $S(\nu,z)$ into its parametrization (\ref{equa: Parametrization of Z}). The asymptotic expansion provided by Lemma \ref{lemm : study of parameter S when c=1} is not sufficient. Nonetheless, we iterate Newton's polygon method on $\mathrm{Pol_4}$, from the proof of Lemma \ref{lemm : study of parameter S when c=1}, to compute the asymptotic expansion of $S(\nu,z)$ up to order 2, to get the announced asymptotic expansion. These computations are available in the companion Maple file \cite{Maple}. We end by checking that the explicit expressions for $\beth(\nu)$ and $\daleth(\nu)$ we obtain never vanish. 
\end{proof}

\section{Ising model with external magnetic field: \texorpdfstring{$c\neq 1$}{c<=1}}	\label{sect: External magnetic field}

In this section we turn our attention to the analysis of the Ising model with an external magnetic field. First, we provide a complete study of the asymptotic behavior of the partition function $\kZ_n$ when the external field is weak, meaning that the variable $c$ is close to $1$. The main idea is to derive results from the case without an external magnetic field by perturbing it. In a second step, we deduce some critical exponents of the model. 

We restrict ourselves to the case of small external magnetic field (i.e. when $c$ is close to $1$) for technical reasons, in particular to avoid multiplying tedious computations. Nonetheless, in the case of general external magnetic field (i.e. for any $c>0$), all the results should be preserved and provable without additional theoretical material.
 
	\subsection{Enumerative results and asymptotic behavior}

The purpose of this section is to prove Theorem \ref{theo: Asymptotic behavior of the coefficients of Z}. As previously, we first concentrate on the power series $S$, from which we will derive the result.

The first step of the proof, is to demonstrate that the two powers series $S$ and $\kZ$ have close singularities. Denote the radius of convergence of $z\mapsto S(\nu,c,z)$ and $z\mapsto Z(\nu,c,z)$ by $\rho_{\nu,c}$ and $\mu_{\nu,c}$, respectively. The first result is the following simple relation between their radius of convergence.

\begin{lemm}\label{lemm : S and Z share the same singularities}
Under the asumptions of Theorem \ref{theo: Asymptotic behavior of the coefficients of Z}, the power series $S(\nu,c,z)$ and $\kZ(\nu,c,cz)$ share the same singularities. Thus, we have that $\mu_{\nu,c}=c\cdot\rho_{\nu,c}$.
\end{lemm}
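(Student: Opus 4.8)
The plan is to exploit the explicit parametrization~\eqref{equa: Parametrization of Z}, which writes $\kZ(\nu,c,cz)$ as a rational function of $S(\nu,c,z)$, $\nu$, $c$ and $z$, with denominator
$$9z^2(1-\nu^2)\bigl(1+3c^2(1-\nu^2)S(\nu,c,z)\bigr).$$
Since a rational function of convergent power series is again a convergent power series with at least the same radius of convergence, any singularity of $\kZ(\nu,c,cz)$ must be a singularity of $S(\nu,c,z)$ or a zero of the denominator (or $z=0$, which is irrelevant). Conversely, a singularity of $S$ need not a priori be a singularity of $\kZ$ because of possible cancellations between numerator and denominator; so the two inclusions must be handled separately. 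The factor $z^2(1-\nu^2)$ is harmless for $\nu\neq 1$ and $z$ away from $0$, so the crux is the factor $1+3c^2(1-\nu^2)S(\nu,c,z)$.

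First I would show that $S$ and $z\mapsto \bigl(1+3c^2(1-\nu^2)S(\nu,c,z)\bigr)^{-1}$ have the same radius of convergence $\rho_{\nu,c}$, exactly as in the proof of Corollary~\ref{coro : Asymptotic expansion of I when c=1}: by Theorem~\ref{theo : Positivity generating function Ising tetra} the series $S$ has nonnegative coefficients, so $|S(\nu,c,z)|\leq S(\nu,c,\rho_{\nu,c})$ for $|z|\leq\rho_{\nu,c}$, and one checks (using the explicit value from Lemma~\ref{lemm : study of parameter S when c=1} perturbed to $c$ near $1$, together with the fact that $3c^2|1-\nu^2|\,S(\nu,c,\rho_{\nu,c})<1$) that the denominator does not vanish for $|z|\le\rho_{\nu,c}$. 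This shows no new singularity is created by the denominator inside the disc of radius $\rho_{\nu,c}$, so $\mu_{\nu,c}\geq c\,\rho_{\nu,c}$ — the $c$ coming from the substitution $z\mapsto cz$. For the reverse inequality, I would argue that $\rho_{\nu,c}$ is genuinely a singularity of $\kZ(\nu,c,cz)$: plugging the square-root (resp. cube-root) singular expansion of $S$ at $\rho_{\nu,c}$ into~\eqref{equa: Parametrization of Z} produces a singular term in $\kZ$ whose coefficient is an explicit algebraic expression in $\nu,c,\rho_{\nu,c},S(\nu,c,\rho_{\nu,c})$ that one verifies does not vanish (for $c$ in a possibly smaller neighbourhood of $1$, by continuity from the $c=1$ computation where the analogous constant $\daleth(\nu)$ was checked nonzero). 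Hence $\mu_{\nu,c}\le c\,\rho_{\nu,c}$, giving equality; and since the only candidate singularities of $\kZ(\nu,c,cz)$ on its circle of convergence are those of $S$, which by (a perturbation of) Lemma~\ref{lemm : study of parameter S when c=1} reduce to the single point $\rho_{\nu,c}$, the two series share \emph{all} their singularities, not just the dominant one.

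The main obstacle I anticipate is the step asserting that the denominator $1+3c^2(1-\nu^2)S(\nu,c,z)$ stays bounded away from zero on the closed disc $|z|\le\rho_{\nu,c}$: this requires controlling $S(\nu,c,\rho_{\nu,c})$ for $c$ near $1$, which is not given in closed form in the excerpt the way $S(\nu,1,\rho_\nu)$ is. I would handle this by a continuity/perturbation argument — the strict inequality $3|1-\nu^2|S(\nu,1,\rho_\nu)<1$ holds at $c=1$ by the explicit formulas~\eqref{equa: Expression of S(nu,1,rho)} of Lemma~\ref{lemm : study of parameter S when c=1}, and both sides vary continuously in $c$, so it persists for $c\in[1-\varepsilon_\nu,1+\varepsilon_\nu]$ after possibly shrinking $\varepsilon_\nu$ — and by invoking the aperiodicity of $S$ together with the Daffodil lemma~\cite[Lemma~IV.1, p.266]{FS} to rule out the denominator vanishing elsewhere on the circle $|z|=\rho_{\nu,c}$. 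A secondary subtlety is making sure the numerator $Pol_\kZ$ does not itself vanish along with the denominator at $\rho_{\nu,c}$; this is exactly the nonvanishing-of-$\daleth$-type verification above and is deferred to the Maple companion~\cite{Maple}, as elsewhere in the paper.
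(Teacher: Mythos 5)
Your overall skeleton matches the paper's: reduce via the parametrization \eqref{equa: Parametrization of Z} to the non-vanishing of the factor $1+3c^2(1-\nu^2)S(\nu,c,z)$, bound $|S(\nu,c,z)|\le S(\nu,c,\rho_{\nu,c})$ on the closed disc by nonnegativity of the coefficients, and then control $S(\nu,c,\rho_{\nu,c})$. The gap is in how you control it. You claim that the strict inequality $3|1-\nu^2|\,S(\nu,1,\rho_\nu)<1$ holds at $c=1$ ``by the explicit formulas \eqref{equa: Expression of S(nu,1,rho)}'' and then propagate it by continuity. But those formulas give $S(\nu,1,\rho_\nu)=\frac{1}{3(\nu^2-1)}$ for every $\nu\ge\nu_\star=4$, so $3|1-\nu^2|\,S(\nu,1,\rho_\nu)=1$ exactly: in the whole low-temperature regime the inequality is an equality at $c=1$, the denominator $1+3(1-\nu^2)S(\nu,1,z)$ genuinely vanishes at $z=\rho_\nu$ (this is precisely why Corollary \ref{coro : Asymptotic expansion of I when c=1} has to invoke the Daffodil lemma and argue that equality of moduli can occur only at $z=\rho_\nu$, rather than claiming non-vanishing of the denominator), and a continuity argument cannot upgrade an equality to a strict inequality for nearby $c$. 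So for $\nu\ge\nu_\star$ your key step fails; for $\nu<\nu_\star$ it is salvageable (there $3|1-\nu^2|S(\nu,1,\rho_\nu)=|\sqrt{\nu}-1|<1$), but even then you would need to justify continuity of $c\mapsto S(\nu,c,\rho_{\nu,c})$, which is not available at this stage of the paper.

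The paper closes exactly this gap by a different, exact argument: it writes the numerator of the characteristic equation $\phi(S)-S\phi'(S)=0$ as $Q_1\cdot Q_2$, disposes of the explicit roots of $Q_1$, and applies Sturm's theorem to $Q_2$ to count real roots in the half-open interval $\bigl(0,\tfrac{1}{3c^2|1-\nu^2|}\bigr]$ for all $c$ in a neighbourhood of $1$, checking separately that the endpoint is not a root; since $S(\nu,c,\rho_{\nu,c})$ is the smallest positive root, this yields the strict bound $S(\nu,c,\rho_{\nu,c})<\tfrac{1}{3c^2|1-\nu^2|}$ without any appeal to strictness at $c=1$. If you want to keep a perturbative flavour instead, you would have to expand $S(\nu,c,\rho_{\nu,c})$ (or the relevant root of the discriminant) in $c$ around $1$ for $\nu\ge\nu_\star$ and determine the sign of the first-order correction, i.e.\ do genuine work at the degenerate point rather than cite continuity. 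Your remaining remarks (Daffodil lemma on the circle, and checking that the numerator does not cancel the singularity, deferred to the Maple companion) are consistent with what the paper does elsewhere, but they do not repair this central step.
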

\begin{proof}
All computations performed in this proof are detailed in the Maple companion \cite{Maple}. Observe that from the parametrization of $\kZ$  given in Equation (\ref{equa: Parametrization of Z}), it is sufficient to prove that $\frac{1}{(1+3c^2(1-v^2)S)}$ and $S$ share the same singularities. To do this, similarly to the proof of Corollary \ref{coro : Asymptotic expansion of I when c=1}, we demonstrate that the modulus of the power series $S$ within its disk of convergence is smaller than $\frac{1}{3c^2|1-v^2|}$, for values of $c$ close enough to $1$.

The coefficients of S being positive, we have that $\vert S(\nu,c,z)\vert\leq S(\nu,c,\rho_{\nu,c})$ for any $\vert z\vert\leq \rho_{\nu,c}$. Thus, to conclude it is sufficient to prove that $S(\nu,c,\rho_{\nu,c})<\frac{1}{3c^2|1-v^2|}$. The quantity $S(\nu,c,\rho_{\nu,c})$ is the smallest real positive solution of the characteristic equation $\phi(S)-\nolinebreak S\phi'(S)=0$, where $\phi$ is defined by rewriting the Equation (\ref{equa: Definition of S}) as $S=z\cdot\phi(S)$. The purpose of the rest of the proof is to demonstrate that the characteristic equation admits at least one real solution in the interval $(0,\frac{1}{3c^2|1-v^2|})$. 

First, one can observe that the numerator and the denominator of the characteristic equation do not share any common root for any $\nu>0$, and any $c$ chosen close enough to $1$. This can be done by computing there resultant and seeing that it does not cancel when we fix the value of $\nu$ and impose  $c$ to take values close enough to $1$ (see Maple companion file \cite{Maple}). Hence, we can focus on the numerator of the characteristic equation. The latter factorizes into a product of two polynomials $Q_1(S,\nu,c,z)\cdot Q_2(S,\nu,c,z)$ where 
$$Q_1(S,\nu,c,z) = \left(1+3c(1-\nu^2)S\right)\left(1-3c(1-\nu^2)S\right),$$
and where $Q_2$ is of degree $8$ with respect to $S$. When $c< 1$, the roots of $Q_1$ have a modulus greater than the bound we need. Thus, we investigate the roots of $Q_2$ and prove that it has a positive root whose modulus is smaller than $\frac{1}{3c^2|1-v^2|}$. To do this, we use the effective Sturm sequence method (see \cite{Sturm}) to count the number of real roots of $Q_2$ within the interval $(0,\frac{1}{3c^2|1-v^2|}]$. 

Recall that the Sturm's theorem states that the number of distinct real roots of a polynomial $\mathrm{Pol}$ in a half-open interval $(a,b]$ is equal to the difference  $V_\mathrm{Pol}(a)-V_\mathrm{Pol}(b)$, where $V_\mathrm{Pol}(x)$ in the number of sign variations of the so called Sturm sequence of the polynomial $\mathrm{Pol}$ at $x$. The latter is a finite sequence that can be computed recursively from  the polynomial and the value of the points of interest. Let us apply it to $Q_2$.

Fix $\nu>0$.  There exists a constant $\varepsilon_\nu>0$, such that for any  $c\in\left[1-\varepsilon_\nu,1+\varepsilon_\nu\right]$, the number of sign variation of the Sturm sequence of $Q_2$ at $S=0$ is either $4$ or $5$ and its number of sign variation at $S=\frac{1}{3c^2|1-v^2|}$ is always one less smaller. Hence, there is exactly one root of $Q_2$ in the half-open interval of interest. Furthermore, by plugging  $\frac{1}{3c^2|1-v^2|}$ into the characteristic equation, we observe that it does not cancel, eventually by taking a smaller value of $\varepsilon_\nu$. This conclude the proof of the first point of the Lemma.

\bigskip

As a consequence, we  obtain directly the relation $\mu_{\nu,c}=c\rho_{\nu,c}$ between the radius of convergence of the power series $\kZ$ and $S$.
\end{proof}

In order to study the singular behavior of the power series $S$, we will crucially need the following result about the regularity of its radius of convergence.

\begin{lemm}\label{lemm : The radius of convergence are contiuous in the varaible c}
Under the asumptions of Theorem \ref{theo: Asymptotic behavior of the coefficients of Z},  the radius of convergence of $S(\nu,c,z)$ and $\kZ(\nu,c,z)$ are continuous with respect to the variable $c$.
\end{lemm}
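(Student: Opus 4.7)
The plan is to reduce the continuity of both radii of convergence to the continuity of the root $S(\nu,c,\rho_{\nu,c})$ of the characteristic equation, which was already singled out in the proof of the previous lemma. By Lemma \ref{lemm : S and Z share the same singularities} we have $\mu_{\nu,c}=c\cdot\rho_{\nu,c}$, so once we prove continuity of $\rho_{\nu,c}$ in $c$, continuity of $\mu_{\nu,c}$ follows immediately. Hence it is enough to handle $\rho_{\nu,c}$.

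Recall that the Lagrangian equation \eqref{equa: Definition of S} writes $z=S/\phi(\nu,c,S)$ for the rational function $\phi$ appearing in the proof of Lemma \ref{lemm : S and Z share the same singularities}, and that the characteristic equation $\phi(\nu,c,S)-S\partial_S\phi(\nu,c,S)=0$ clears denominators into a polynomial equation $\Psi(S,\nu,c)=0$ whose coefficients are polynomial in $c$ (and $\nu$). In the course of that proof it was established that, for $\nu$ fixed and $c\in[1-\varepsilon_\nu,1+\varepsilon_\nu]$ (after possibly shrinking $\varepsilon_\nu$), the polynomial $\Psi(\,\cdot\,,\nu,c)$ admits exactly one real root $S_\star(c)$ in the open interval $(0,\tfrac{1}{3c^2|1-\nu^2|})$, that this root is the one equal to $S(\nu,c,\rho_{\nu,c})$, and that neither $0$ nor $\tfrac{1}{3c^2|1-\nu^2|}$ is a root. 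This isolates $S_\star(c)$ as the unique real root of $\Psi(\,\cdot\,,\nu,c)$ in some compact subinterval $[a,b]\subset(0,\tfrac{1}{3c^2|1-\nu^2|})$, uniformly for $c$ in a neighborhood of any given $c_0$.

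Continuity of $c\mapsto S_\star(c)$ then follows from a standard continuous-dependence-of-roots argument: since the coefficients of $\Psi$ depend continuously on $c$, by Rouché's theorem (applied to a small disk around $S_\star(c_0)$ in which $S_\star(c_0)$ is the unique root of $\Psi(\,\cdot\,,\nu,c_0)$, a disk that exists because $S_\star(c_0)$ is isolated among real roots and we may further shrink so that no other complex root of $\Psi(\,\cdot\,,\nu,c_0)$ lies inside), the number of roots of $\Psi(\,\cdot\,,\nu,c)$ inside that disk is constant for $c$ close to $c_0$, and these roots depend continuously on $c$; since $S_\star(c)$ is characterized as the unique real root in $[a,b]$, it is this continuous root. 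Equivalently, one can invoke the implicit function theorem once one checks that $\partial_S\Psi(S_\star(c_0),\nu,c_0)\neq 0$; should this fail at isolated values of $c$, the previous Rouché-based argument still suffices.

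Finally, from $S_\star(c)=S(\nu,c,\rho_{\nu,c})$ and the Lagrangian equation we recover
\[
\rho_{\nu,c}=\frac{S_\star(c)}{\phi(\nu,c,S_\star(c))},
\]
and since $\phi(\nu,c,S_\star(c))\neq 0$ (the denominator and numerator of the characteristic equation share no common root on our range of $c$, as already checked in the proof of Lemma \ref{lemm : S and Z share the same singularities}), the right-hand side is a continuous function of $c$. This yields continuity of $\rho_{\nu,c}$, and therefore of $\mu_{\nu,c}=c\,\rho_{\nu,c}$, in $c$. The main technical point is really the one already resolved in Lemma \ref{lemm : S and Z share the same singularities}, namely controlling the location of the root of the characteristic equation uniformly in $c$; granting that, the present lemma is a standard continuity statement.
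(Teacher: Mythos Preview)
Your argument is workable but takes a genuinely different route from the paper. The paper proves continuity by a direct combinatorial comparison of coefficients: for $c_1\le c_2$ one has $c_1^n\kZ_n(\nu,c_1)\le c_2^n\kZ_n(\nu,c_2)$ (since $c^{\sigma_\oplus-\sigma_\ominus}c^n=c^{2\sigma_\oplus}$ is monotone in $c$) and, using $\sigma_\oplus\le n$, also $c_2^n\kZ_n(\nu,c_2)\le (c_2/c_1)^{2n}\,c_1^n\kZ_n(\nu,c_1)$. These sandwich the radii as
\[
\rho_{\nu,c_1}\ \ge\ \rho_{\nu,c_2}\ \ge\ \Big(\tfrac{c_1}{c_2}\Big)^{2}\rho_{\nu,c_1},
\]
giving continuity (indeed monotonicity and an explicit modulus of continuity) with no algebraic input whatsoever, and in particular without invoking Lemma~\ref{lemm : S and Z share the same singularities}.

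Your approach instead tracks $S_\star(c)=S(\nu,c,\rho_{\nu,c})$ as a root of the characteristic polynomial and appeals to continuous dependence of roots on coefficients. Two points deserve more care than the sketch provides. First, Lemma~\ref{lemm : S and Z share the same singularities} establishes exactly one root of the factor $Q_2$ in the interval, not of the full numerator $Q_1Q_2$; the positive root of $Q_1$ can fall inside $(0,\tfrac{1}{3c^2|1-\nu^2|})$ for $c$ on one side of $1$, so you should track the smallest positive root (or work with $Q_2$ directly) rather than claim uniqueness for $\Psi$. Second, for $\nu\ge\nu_\star$ one has $S_\star(1)=\tfrac{1}{3|\nu^2-1|}$, which is precisely a zero of the denominator $(1-9c^2(1-\nu^2)^2S^2)^2$ in the Lagrangian relation; your formula $\rho_{\nu,c}=S_\star(c)/\phi(\nu,c,S_\star(c))$ is then an indeterminate $0/0$ at $c=1$, and the justification you cite (no common root of numerator and denominator of the \emph{characteristic} equation) does not address this. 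The singularity is in fact removable, but this needs to be said. The paper's combinatorial bound avoids both issues entirely.
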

\begin{proof}
We prove the continuity of $\rho_{\nu,c}$ by considering it as the radius of convergence of the power series $Z(\nu,c,cz)$. This fact has been proved in Lemma \ref{lemm : S and Z share the same singularities} when $c$ lies in the interval $\left[1-\varepsilon_\nu,1+\varepsilon_\nu\right]$. Hence, the similar regularity for the radius of convergence of the power series $\kZ(\nu,c,z)$ and $S(\nu,c,z)$ follows directly from the relation between their radius of convergence.

\bigskip

First, let us fix $c_1\leq c_2$ in $\left[1-\varepsilon_\nu,1+\varepsilon_\nu\right]$. Observe that for any $n\geq 1$, we have the following inequality:
$$0\leq {c_1}^n \kZ_n(\nu,c_1) = \sum_{(T,\sigma)\in\kT_n}\nu^{m(T,\sigma)} {c_1}^{2\sigma_\oplus} \leq \sum_{(T,\sigma)\in\kT_n}\nu^{m(T,\sigma)} {c_2}^{2\sigma_\oplus} = {c_2}^n \kZ_n(\nu,c_2) $$
Thus, we deduce that $\kZ(\nu, c_1, c_1z) \leq \kZ(\nu, c_2, c_2z)$, for all $z>0$. The quantities are possibly infinite. On the other hand, by using that their are more vertices than positive spins in a planar map endowed with a spin configuration, we obtain that
\begin{align*}
 \kZ(\nu, c_2, c_2z)	&= \sum_{(T,\sigma)\in\kT}\nu^{m(T,\sigma)} {c_2}^{2\sigma_\oplus}z^{\vert \rV(T)\vert} \\
						&= \sum_{(T,\sigma)\in\kT}\nu^{m(T,\sigma)} {c_1}^{2\sigma_\oplus}z^{\vert \rV(T)\vert}  \left(\frac{c_2}{c_1}\right)^{2\sigma_\oplus} \\
						&\leq \sum_{(T,\sigma)\in\kT}\nu^{m(T,\sigma)} {c_1}^{2\sigma_\oplus}z^{\vert \rV(T)\vert}  \left(\frac{c_2}{c_1}\right)^{2\vert \rV(T)\vert} =  \kZ\left(\nu, c_1, \left(\frac{c_2}{c_1}\right)^2  c_1z\right),
\end{align*}
for all $z>0$. Therefore, we obtain the inequalities
$$\rho_{\nu,c_1} \geq \rho_{\nu,c_2} \geq \left(\frac{c_1}{c_2}\right)^2 \rho_{\nu,c_1}, $$
which proves the continuity of $c\mapsto \rho_{\nu,c}$ on $\left[1-\varepsilon_\nu,1+\varepsilon_\nu\right]$.
\end{proof}

From the previous lemma, we can provide a study of the singularities of the power series $S$.	
	
\begin{lemm}\label{lemm : study of parameter S for c close to 1}
Under the asumptions of Theorem \ref{theo: Asymptotic behavior of the coefficients of Z}, the  unique dominant singularity of the power series $S(\nu,c,z)$ is it radius of convergence $\rho_{\nu,c}$.
\end{lemm}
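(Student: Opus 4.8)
The plan is to mimic closely the structure of the proof of Lemma \ref{lemm : study of parameter S when c=1}, using continuity in $c$ to transfer the aperiodicity-type conclusions from the case $c=1$ to a neighborhood. First I would record that, by Pringsheim's theorem and the nonnegativity of the coefficients of $S$ (Theorem \ref{theo : Positivity generating function Ising tetra}), $\rho_{\nu,c}$ is a singularity of $z\mapsto S(\nu,c,z)$; so the whole content is to exclude any \emph{other} singularity of the same modulus. As in the $c=1$ case, the singularities of $S(\nu,c,z)$ lie among the roots of the discriminant of an irreducible cancelling polynomial coming from the Lagrangian equation (\ref{equa: Definition of S}); this discriminant factorizes (over $\mathbb{Q}(\nu,c)$) into a product of finitely many polynomials in $z$, and $\rho_{\nu,c}$ is a root of one distinguished factor (the analogue of $P_1$ or $P_2$, now depending on $c$). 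One then must show that the remaining roots of these factors either have modulus strictly larger than $\rho_{\nu,c}$, or, if they sit on the circle $|z|=\rho_{\nu,c}$, are not actual singularities of $S$.

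For the modulus comparison I would use Lemma \ref{lemm : The radius of convergence are contiuous in the varaible c}: the roots of the discriminant factors are continuous (algebraic) functions of $c$, and at $c=1$ the proof of Lemma \ref{lemm : study of parameter S when c=1} established that every discriminant root other than $\rho_\nu$ has modulus strictly greater than $\rho_\nu$, \emph{except} at the finitely many exceptional values of $\nu$ identified there (where some conjugate root reaches the circle), and even there the offending root turned out not to be singular. By continuity of $\rho_{\nu,c}$ and of the roots in $c$, for $c$ in a sufficiently small interval $[1-\varepsilon_\nu,1+\varepsilon_\nu]$ any root that was strictly outside the circle at $c=1$ stays strictly outside; so only the roots that collide with the circle need a closer look. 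For those remaining finitely many candidate roots on $|z|=\rho_{\nu,c}$, I would apply verbatim the Methodology box from the proof of Lemma \ref{lemm : study of parameter S when c=1}: produce a cancelling polynomial $\mathrm{Pol}_1(\nu,c,z,S)=0$ from (\ref{equa: Definition of S}), localize it at the candidate root, compute the first terms of the Puiseux expansions of its branches, discard all branches whose constant coefficient has modulus exceeding $S(\nu,c,\rho_{\nu,c})$ (legitimate since the positivity of the coefficients of $S$ gives $|S(\nu,c,z)|\le S(\nu,c,\rho_{\nu,c})$ on the closed disk), and check via Newton's polygon that the surviving branch — the one matching $S$ — is analytic at that point. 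These are the same computations as before, now carried out with $c$ kept as a parameter (or for $c$ in a small explicit interval), and they are recorded in the Maple companion \cite{Maple}.

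The main obstacle is that at $c=1$ the argument relied on explicit closed forms for $\rho_\nu$ and $S(\nu,\rho_\nu)$, whereas for $c\ne 1$ these quantities are only characterized implicitly, via the characteristic equation $\phi(S)-S\phi'(S)=0$ (exactly the situation already handled in the proof of Lemma \ref{lemm : S and Z share the same singularities}). So rather than substituting explicit values, I would work with cancelling polynomials and resultants: a resultant computation shows the numerator and denominator of the characteristic equation have no common root for $c$ near $1$, and then $S(\nu,c,\rho_{\nu,c})$ is pinned down as the smallest positive root of the relevant factor $Q_2$, whose location (and the bound $|S|\le S(\nu,c,\rho_{\nu,c})$) can be controlled by the Sturm-sequence argument already used there. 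The other delicacy is uniformity in $\nu$: a priori the exceptional $\nu$'s where a conjugate root touches the circle could move with $c$, but since they are cut out by polynomial equations in $(\nu,c)$ whose $c=1$ slice has only finitely many roots, shrinking $\varepsilon_\nu$ if necessary keeps the picture unchanged for each fixed $\nu$. Putting these together yields that $\rho_{\nu,c}$ is the unique dominant singularity of $S(\nu,c,z)$, as claimed.
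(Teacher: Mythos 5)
Your skeleton matches the paper's up to a point: Pringsheim's theorem for $\rho_{\nu,c}$, localization of the possible singularities among the roots of the discriminant $\mathcal{D}_{\nu,c}$ of the cancelling polynomial from (\ref{equa: Definition of S}), and the continuity/compactness argument showing that roots whose modulus differs from $\rho_{\nu,1}$ at $c=1$ stay away from the circle of convergence for $c$ near $1$. The genuine gap is in how you treat the roots that touch the circle at $c=1$. Your dichotomy --- either a root is strictly outside the circle at $c=1$, or $\nu$ is one of finitely many exceptional values --- misses the main difficulty: for every $\nu\geq\nu_\star$ the point $\rho_{\nu,1}$ is a multiple root of $\mathcal{D}_{\nu,1}$ (the factor $P_1$ occurs with multiplicity), so four branches $R_{1,1},\dots,R_{1,4}$ of the discriminant, which are Puiseux series in $\sqrt{1-c}$, all coincide with $\rho_{\nu,1}$ at $c=1$ and split for $c\neq 1$. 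One of these branches \emph{is} $\rho_{\nu,c}$, and the entire issue is whether any of the other three lands on the circle $\vert z\vert=\rho_{\nu,c}$. Your fallback for such points --- run the Methodology with $c$ as a parameter and ``check via Newton's polygon that the surviving branch is analytic'' --- presupposes the conclusion: there is no a priori reason the branch matching $S$ should be analytic at a nearby splitting of the dominant singularity, and if such a point did lie on the circle and were singular, the lemma would simply be false. So the proof must show these points are \emph{off} the circle, which is not something your non-singularity check can deliver.

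That modulus-separation step is precisely what the paper supplies and your proposal omits: it computes the Puiseux expansions of the moduli $\vert R_{1,i}(\nu,c)\vert$ at $c=1$ in powers of $\sqrt{1-c}$ (three terms suffice) and shows the four moduli are pairwise distinct for $c\neq 1$ close to $1$; combined with the continuity of $c\mapsto\rho_{\nu,c}$ from Lemma \ref{lemm : The radius of convergence are contiuous in the varaible c}, this forces exactly one root of $\mathcal{D}_{\nu,c}$ onto the circle, namely $\rho_{\nu,c}$ itself. The same distinct-moduli computation (not an analyticity check as in the $c=1$ proof of Lemma \ref{lemm : study of parameter S when c=1}) is what handles the five atypical values of $\nu$ where a second root reaches the circle at $c=1$. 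Your closing ``uniformity in $\nu$'' remark does not repair this: equality of moduli of two algebraic branches is not cut out by a polynomial equation in $(\nu,c)$, and finiteness of the $c=1$ slice does not preclude the equal-modulus locus from containing a whole segment $\{\nu=\nu_0\}$ with $c$ varying; ruling that out is exactly what the expansion of the moduli accomplishes.
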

\begin{proof}
The case $c=1$ is the purpose of Lemma \ref{lemm : study of parameter S when c=1}. In this proof, we focus on the case $c\neq 1$. 

The power series $S(\nu,c,z)$ has nonnegative coefficients so that its radius of convergence is a singular point by Pringsheim's theorem. We prove that there is no other singular points on its circle of convergence, when $c\neq 1$ is close enough to $1$. 

Let us localize the possible singularities of the power series $S$. To do this, let $\mathcal{D}_{\nu,c}(z)$ be the discriminant of the polynomial equation verified by $S(\nu,c,z)$ and obtained from Equation (\ref{equa: Definition of S}). The singular points of $S$ are among the roots of $\mathcal{D}_{\nu,c}$. Thus, we show that there is exactly one root of $\mathcal{D}_{\nu,c}$ on the circle of convergence of the power series $S$, when $c\neq 1$ is taken close to $1$. 

\bigskip

The discriminant $\mathcal{D}_{\nu,c}$ has degree $8$, and recall from Lemma \ref{lemm : study of parameter S when c=1} that $\mathcal{D}_{\nu,1}(z)$ factorizes into a product of three polynomials $P_1^3P_2P_3$. We denote the roots of  $\mathcal{D}_{\nu,c}$ by $R_{1,1}$,$R_{1,2}$,$R_{1,3}$,$R_{1,4}$, $R_{2,1}$, $R_{2,2}$, $R_{3,1}$, and $R_{3,2}$, such that $P_1$ cancels at $R_{1,i}(\nu,1)$, for $i\in\left\{1,2,3,4\right\}$,  $P_2$ cancels at $R_{2,i}(\nu,1)$, for $i\in\left\{1,2\right\}$, and $P_3$ cancels at $R_{3,i}(\nu,1)$, for $i\in\left\{1,2\right\}$.

From now on, we fix $\nu>0$. Each root of $\mathcal{D}_{\nu,c}(z)$ is a  Puiseux series in the variable $c$, that is convergent in a neighborhood of $c=1$.   Moreover, from Newton's polygon algorithm, we get that the roots $R_{1,c}(\nu,c)$ are power series in $\sqrt{1-c}$, where we consider the principal square root, and that the roots $R_{2,i}(\nu,c)$ and $R_{3,i}(\nu,c)$ are power series in $(1-c)$, see the Maple companion file \cite{Maple}. Thus,  at $c=1$, all of them are analytic functions with radius of convergence greater than a positive constant $r_\nu$.

\bigskip

Assume that $\nu$ is a generic value, meaning that only one root of $P_1,P_2$ and $P_3$ lies on the circle of convergence of the series $S(\nu,1)$.

First, if $\nu<4$, then the radius of convergence of $S(\nu,1)$ is a simple root of $P_2$. Without loss of generality let's consider that it corresponds to $R_{2,1}(\nu,1)$. Each other root of $\mathcal{D}_{\nu,c}$ takes a distinct value at $c=1$, so that at this value of $c$,  the minimum distance from any of them to the circle of convergence of $S(\nu,1)$ is a positive constant $\delta_\nu>0$. Thus, by continuity of the Puiseux series, there exists a real number $\varepsilon_\nu\in (0,r_\nu]$, such that for all $c\in\left[1-\varepsilon_\nu,1+\varepsilon_\nu\right]$,
$$\Big\vert |B(\nu,c)|-|B(\nu,1)|\Big\vert<\frac{\delta_\nu}{2} ,$$
where $B$ is any of the roots of $\mathcal{D}_{\nu,c}$. Thus, if $B\neq R_{2,1}$, by the triangular inequality, for all $c\in\left[1-\varepsilon_\nu,1+\varepsilon_\nu\right]$
$$\Big\vert |B(\nu,c)|-R_{2,1}(\nu,c)\Big\vert>0 .$$
Thus, except for $R_{2,1}$, the roots of the discriminant, seen as functions of the variable $c$, stay away in modulus from $R_{2,1}$ when $c$ is close to $1$. Hence, by continuity of $c\mapsto \rho_{\nu,c}$ (see lemma \ref{lemm : The radius of convergence are contiuous in the varaible c}) we conclude that  $R_{2,1}(\nu,c)=\rho_{\nu,c}$ locally around $c=1$. Clearly we proved that there is only one root of $\mathcal{D}_{\nu,c}$ whose modulus is equal to the radius of convergence of $S$, for any  $c\in\left[1-\varepsilon_\nu,1+\varepsilon_\nu\right]$. 

Secondly, if $\nu>4$, then the radius of convergence of $S$ is the root of $P_1$. Observe that at $c=1$ the four roots $R_{1,1}$,$R_{1,2}$,$R_{1,3}$ and $R_{1,4}$ merge, and are equal to the radius of convergence of $S$.  The other roots of $\mathcal{D}_{\nu,c}$ all take a distinct values at $c=1$. With similar arguments as before, we have that they stay stay away from the circle of convergence of the power series $S(\nu,c)$ for any $c$ close enough to $1$. To conclude this case, we prove that the four quantities $R_{1,1}$,$R_{1,2}$,$R_{1,3}$,$R_{1,4}$ have distinct modulus for $c\neq 1$ and close enough to $1$, so that by continuity only one root corresponds to the radius of convergence of $S(\nu,c)$. As explained previously, each of these roots admits a Puiseux series expansion at $c=1$ of the form 
$$R_{1,i}(\nu,c)=\sum_{k\geq0}{b_{k,i}(\nu)\left(\sqrt{1-c}\right)^{k}}.$$ 
Any of these coefficients can be explicitly computed by Newton's polygons method. Furthermore, we derive from this expression that the modulus $|R_{1,i}(\nu,c)|$ admits an asymptotic expansion at $c=1$ of the form 
$$|R_{1,i}(\nu,c)|=\mathfrak{b}_{0,i}(v)+\mathfrak{b}_{2,i}(v)(1-c)+\mathfrak{b}_{3,i}(v)\left(\sqrt{1-c}\right)^{3}+o((\sqrt{1-c})^3),$$ where for any $k$, the coefficient $\mathfrak{b}_{k,i}$ is an explicit real function of the coefficients $b_{0,i},\dots,b_{k,i}$. One can compute explicitly these three first terms for each root (see the companion Maple file \cite{Maple}), and deduce that the asymptotic behavior of the $|R_{1,i}(\nu,c)|$ at $c=1$ are distinct from each other. Thus, the modulus of the branches are distinct when $c$ is close enough to $1$. This proves that only one root of the discriminant of $S$ lies on its circle of convergence.

\bigskip

Now, assume that  $\nu$ is an atypical value, meaning that at least two roots of $P_1,P_2$ and $P_3$ lie on the circle of convergence of the series $S$. This only happens at the following five explicit reals (see the proof of Lemma \ref{lemm : study of parameter S when c=1}): 
\begin{center}
\scalebox{0.9}{${1/3}, \frac{4\left(-4+\sqrt{34}\right)^2}{9}, 4,   \frac{4\left(369793 + 810\sqrt{359}\right)^{1/3}}{27}  + \frac{20596}{27\left(369793 + 810\sqrt{359}\right)^{1/3}}  +\frac{256}{27}   ,  \frac{4\left(4+\sqrt{34}\right)^2}{9}$ }
\end{center}
For each case, we prove that the modulus of the roots of  $\mathcal{D}_{\nu,c}$ that lie on the circle of convergence at $c=1$, have distinct modulus when $c\neq 1$ is close enough to $1$. We proceed as previously by computing the first terms of the asymptotic expansion of their modulus at $c=1$  (see the companion Maple file \cite{Maple}). In practice, it suffices to compute the first three terms to conclude. The roots of  $\mathcal{D}_{\nu,c}$ that do not lie on the circle of convergence at $c=1$, stay away from it when $c\neq 1$ is close enough to $1$ similarly as before. Once again, this proves that only one root of the discriminant of $S$ lies on its circle of convergence.
\end{proof}

Finally, we can deduce the asymptotic behavior of the partition function $\kZ$ and consequently prove Theorem \ref{theo: Asymptotic behavior of the coefficients of Z}.

\begin{proof}[Proof of Theorem \ref{theo: Asymptotic behavior of the coefficients of Z}]

\bigskip
When $c=1$, the results follows from  Corollary \ref{coro : Asymptotic expansion of I when c=1} and from the transfer theorem (see \cite[Theorem VI.3, p.390]{FS}). Fix $c\neq 1$. From Lemma \ref{lemm : S and Z share the same singularities} and Lemma \ref{lemm : study of parameter S for c close to 1}, we get that the power series $\kZ(\nu,c,z)$ has a unique dominant singularity, which is its radius of convergence. 

\medskip

To conclude, we first compute the singular behavior of $S$ at its radius of convergence. The following computations are detailed in the Maple companion \cite{Maple}. Fix $\nu>0$. As previously stated, rewriting Equation (\ref{equa: Definition of S}) as $S=z\cdot\phi(S)$, one have the equality $\phi(S)- S\phi'(S)=0$ for $S=S(\nu,c,\rho_{\nu,c})$. Hence, we can see $z=S\cdot\phi(S)^{-1}$ as a function of $S$ and  use the previous equations to compute its asymptotic behavior when $S\rightarrow {S(\nu,c,\rho_{\nu,c}})^-$. To do so, we use the methodology described in the proof of Lemma \ref{lemm : study of parameter S when c=1}. Then, by local inversion of $z=S\cdot\phi(S)^{-1}$, one can obtain an asymptotic behavior of $S$ seen as a function of $z$ when $z\rightarrow{\rho_{\nu,c}}^-$. By plugging this expression into the parametrization of $\kZ$ with $S$ in (\ref{equa: Parametrization of Z}), we obtain the following asymptotic behavior of the power series $\kZ$ when $z\rightarrow {\mu_{\nu,c}}^-$:
$$\kZ(\nu,c,z) =
 	\kZ(\nu,c,\mu_{\nu,c})+ \beth(\nu,c) \cdot(1-z/\mu_{\nu,c}) + \daleth(\nu,c)\cdot (1-z/\mu_{\nu,c})^{3/2} +o\big((1-z/\mu_{\nu,c})^{3/2}\big), $$
where $\beth(\nu,c)$, $\daleth(\nu,c)$ are explicit constants. Moreover, the quantities $\daleth(\nu,1)$, $\partial_c\daleth(\nu,1)$ and $\partial_c^2\daleth(\nu,1)$ exist and at least one is non-zero, depending of the value of $\nu$. Hence, $\daleth(\nu,c)\neq 0$ in a neighborhood of $c=1$ so that the previous asymptotic expansion is valid for any $\nu>0$ and for any $c\neq 1$ in this neighborhood. The explicit asymptotic behavior of the coefficients of $\kZ$ is a direct consequence of the transfer theorem (see \cite[Theorem VI.3, p.390]{FS}). 
\end{proof}

	\subsection{Critical exponents}

In this last section, we compute some critical exponents of the Ising model. First, we focus on the free energy, and prove Theorem \ref{theo : Critical exponents - alpha}. The phase transition of the model is of order 3.

\begin{proof}[Proof of Theorem \ref{theo : Critical exponents - alpha}]
The expression of the free energy given in (\ref{equa: Expresion of the free energy in function of the radius of convergence of the power series I}) follows directly from its definition given in (\ref{equa: Definition of the free energy F_n}) and (\ref{equa: Definition of the free energy F}), and from the asymptotic of the coefficients of the power series $\kZ$ given in Theorem \ref{theo: Asymptotic behavior of the coefficients of Z}. Its differentiability properties follow directly from this expression and from the explicit expression of the radius of convergence of $\kZ(\nu,1,z)$ given in Equation (\ref{equa: Expression of rho(nu,1)}).
\end{proof}

To compute other critical exponents, we first prove that the magnetization of the discrete model converges to the magnetization of the thermodynamic limit of the model.

\begin{lemm}\label{lemm: Concergence of Mn}
For any $\nu>0$, there exists $0<\varepsilon_\nu<1$ such that for all $c\in\left[1-\varepsilon_\nu,1+\varepsilon_\nu\right]$, the magnetization of the thermodynamic limit verifies:
$$M(\nu,c)=\lim_{n\rightarrow\infty} M_n(\nu,c)$$
\end{lemm}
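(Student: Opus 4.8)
The plan is to show that $M_n(\nu,c) = c\,\partial_c F_n(\nu,c)$ converges to $M(\nu,c) = c\,\partial_c \rF(\nu,c)$ by upgrading the pointwise convergence $F_n \to \rF$ (established by Theorem~\ref{theo: Asymptotic behavior of the coefficients of Z} together with Theorem~\ref{theo : Critical exponents - alpha}) to convergence of derivatives. The standard analytic tool for this is the following fact: if a sequence of convex (or concave) functions converges pointwise on an open interval, then it converges locally uniformly and the derivatives converge at every point of differentiability of the limit. So the first step is to show that, for fixed $\nu>0$, the map $c\mapsto F_n(\nu,c)$ — or rather a suitable reparametrisation of it — is convex. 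This is exactly the classical convexity of the free energy in the magnetic field: writing $h=\log c$, one has $n F_n = \log \kZ_n$ and, from the probabilistic interpretation in~(\ref{equa: Definition Mn and Xn : proba}), $\partial_h (n F_n) = \enuc_n(\sum_v \sigma(v))$ and $\partial_h^2 (n F_n) = \vnuc_n(\sum_v \sigma(v)) \geq 0$. Hence $h\mapsto F_n(\nu,e^h)$ is convex for every $n$.

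\textbf{Key steps.} First I would fix $\nu>0$ and the associated $\varepsilon_\nu$ from Theorem~\ref{theo: Asymptotic behavior of the coefficients of Z}, work on the open interval $h\in(\log(1-\varepsilon_\nu),\log(1+\varepsilon_\nu))$, and set $f_n(h) := F_n(\nu,e^h)$, $f(h) := \rF(\nu,e^h) = -\log\mu_{\nu,e^h}$. Step two: record that each $f_n$ is convex (computation above) and that $f_n \to f$ pointwise on this interval, which follows from Theorem~\ref{theo: Asymptotic behavior of the coefficients of Z} since $\kZ_n(\nu,c) \sim \gimel(\nu,c)\,\mu_{\nu,c}^{-n} n^{-\kappa}$ gives $F_n(\nu,c) = \frac1n\log\kZ_n(\nu,c) \to -\log\mu_{\nu,c}$. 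Step three: invoke the theorem on convergence of convex functions (e.g.\ Rockafellar, \emph{Convex Analysis}, Theorem~25.7, or the elementary argument via the monotonicity of difference quotients) to conclude that at every point $h$ where $f$ is differentiable, $f_n'(h)\to f'(h)$. Step four: translate back. Since $f_n'(h) = c\,\partial_c F_n(\nu,c)\big|_{c=e^h} = M_n(\nu,c)$ and $f'(h) = c\,\partial_c\rF(\nu,c)\big|_{c=e^h} = M(\nu,c)$, we get $M_n(\nu,c)\to M(\nu,c)$ at every $c=e^h$ where $f$ is differentiable. Step five: check that $f$ is differentiable at every $c\in[1-\varepsilon_\nu,1+\varepsilon_\nu]$ (after possibly shrinking $\varepsilon_\nu$): from $f(h) = -\log\mu_{\nu,e^h} = -\log(e^h\rho_{\nu,e^h})= -h-\log\rho_{\nu,e^h}$ and Lemma~\ref{lemm : The radius of convergence are contiuous in the varaible c} (continuity of $c\mapsto\rho_{\nu,c}$), together with the explicit/analytic control of $\rho_{\nu,c}$ coming from the Puiseux-series analysis in the proof of Lemma~\ref{lemm : study of parameter S for c close to 1}, one sees that $c\mapsto\rho_{\nu,c}$ is in fact real-analytic (or at least $C^1$) in a neighbourhood of $c=1$, so $f$ is differentiable there. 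Alternatively, convexity already gives differentiability of $f$ outside an at most countable set, which would suffice for a slightly weaker statement; but the analyticity of $\rho_{\nu,c}$ near $c=1$, available from the earlier sections, lets us cover the full interval.

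\textbf{Main obstacle.} The genuinely non-routine point is Step five: establishing that the limiting free energy $f$ (equivalently $c\mapsto\log\rho_{\nu,c}$) is differentiable at \emph{every} $c$ in the interval, not merely almost everywhere. Convexity alone leaves open the possibility of a corner, which would signal a first-order phase transition in the field at some $c$ near $1$; ruling this out requires genuine input about the analytic structure of $\rho_{\nu,c}$. Fortunately this input is already present: the proof of Lemma~\ref{lemm : study of parameter S for c close to 1} shows that $\rho_{\nu,c}$ is a branch of a Puiseux series in $c$ that is in fact a power series in $(1-c)$ (respectively $\sqrt{1-c}$) locally at $c=1$, and since $\rho_{\nu,c}$ is the modulus of a real positive quantity and is continuous (Lemma~\ref{lemm : The radius of convergence are contiuous in the varaible c}), the square-root branches cannot actually occur for the dominant singularity, leaving $\rho_{\nu,c}$ analytic in $c$ near $1$. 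Hence $f$ is smooth there and the convergence $M_n\to M$ holds for all $c\in[1-\varepsilon_\nu,1+\varepsilon_\nu]$ after shrinking $\varepsilon_\nu$ if necessary. A secondary, purely bookkeeping concern is that $F_n$ involves $\kZ_n(\nu,c)$ with $c$ appearing through $c^{\sigma_\oplus-\sigma_\ominus}$; the change of variable $h=\log c$ is what makes the convexity manifest, and one should state the elementary identities $c\partial_c = \partial_h$ and $(c\partial_c)^2 = \partial_h^2$ once and for all to keep the argument clean.
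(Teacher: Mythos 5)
Your Steps 1--4 are fine and constitute a genuinely different (and softer) route than the paper's: you use convexity of $h\mapsto F_n(\nu,e^h)$ plus pointwise convergence of $F_n$ and the convex-analysis lemma on convergence of derivatives, whereas the paper proves uniform convergence of $M_n(\nu,\cdot)$ via the second Dini theorem (monotonicity of $M_n$ in $c$, since $\chi_n\geq 0$) combined with an explicit singularity analysis of $\partial_c\kZ(\nu,c,z)$ and the transfer theorem, and then interchanges limit and derivative. But your Step 5 contains a genuine gap, and it sits exactly at the one point where the statement is delicate. The claim that ``the square-root branches cannot actually occur for the dominant singularity, leaving $\rho_{\nu,c}$ analytic in $c$ near $1$'' contradicts the paper's own analysis: for $\nu>\nu_\star$ the dominant singularity is the root of $P_1$, whose Puiseux expansion at $c=1$ is in powers of $\sqrt{1-c}$ (proof of Lemma \ref{lemm : study of parameter S for c close to 1}, and explicitly restated in the proof of Theorem \ref{theo: Critical exponents - gamma}), and analyticity of $c\mapsto\rho_{\nu,c}$ at $c=1$ is in any case ruled out by $\chi(\nu,1)=\infty$ for $\nu\geq\nu_\star$ (Theorem \ref{theo: Critical exponents - gamma}). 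Worse, for $\nu>\nu_\star$ the limiting free energy genuinely has a corner at $c=1$: flipping all spins (the root-spin constraint only costs a factor bounded by $\nu^{\pm 4}c^{2}$ per map, since the root vertex has degree $4$) gives $\mu_{\nu,c}=\mu_{\nu,1/c}$, so $h\mapsto \rF(\nu,e^h)$ is even; since the right derivative at $h=0$ is $M_0(\nu)>0$ (Theorem \ref{theo: Critical exponents - beta}), the left derivative is $-M_0(\nu)$ and $f$ is not differentiable at $h=0$. Shrinking $\varepsilon_\nu$ cannot help, because $c=1$ always belongs to the interval.

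Consequently your argument proves the convergence $M_n(\nu,c)\to M(\nu,c)$ only at points where the limit free energy is differentiable in $c$, i.e.\ for $c\neq 1$ (and at $c=1$ when $\nu\leq\nu_\star$, where $\rF(\nu,\cdot)$ is still $C^1$); at $c=1$ with $\nu>\nu_\star$ the convexity lemma yields only that the limit points of $M_n(\nu,1)$ lie between the one-sided derivatives $\pm M_0(\nu)$, which neither proves convergence nor identifies the limit. Closing this gap requires genuinely quantitative input at $c=1$, which is precisely what the paper supplies: writing $M_n(\nu,c)=c\partial_c\kZ_n(\nu,c)/(n\kZ_n(\nu,c))$, it extracts the coefficient asymptotics of $\partial_c\kZ(\nu,c,z)$ from the Lagrangian parametrization and the transfer theorem, obtaining pointwise convergence of $M_n$ to an explicit continuous limit, and only then upgrades to uniform convergence (Dini) to legitimize the limit--derivative interchange. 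If you want to keep your convexity framework, you would still have to import an argument of this type (or some other identification of $\lim_n M_n(\nu,1)$) for the regime $\nu>\nu_\star$, so the convex-analysis shortcut does not by itself cover the full statement.
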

\begin{proof}
Recall that the discrete magnetization $M_n$ (resp. the magnetization $M$) is defined as the derivative with respect to the variable $c$, of the discrete free energy $F_n$  (resp. the free energy $F$), see Expressions (\ref{equa: Definition Mn and Xn : physic}) and (\ref{equa: Definition of magnetization and susceptibility of the thermodynamic limit}). Moreover, the free energy $F$ is defined as the pointwise limit of discrete free energy $F_n$, as $n$ tends to infinity, see Expression (\ref{equa: Definition of the free energy F}). In the next paragraph, we prove that for any $\nu>0$, the sequence of functions $(M_n\left(\nu,\cdot\right))_n$ converges uniformly on $\left[1-\varepsilon_\nu,1+\varepsilon_\nu\right]$, so that, by limit-derivative inversion and uniqueness of the limit, we obtain that $(M_n\left(\nu,\cdot\right))_n$ converges pointwise to $M(\nu,\cdot)$ on this interval. Note that one could prove the result with a direct computation. Instead, we prefer to proceed by using some properties of the statistics involved to shorten the proof. 

Fix $\nu>0$. We prove the uniform convergence of $(M_n\left(\nu,\cdot\right))_n$, as $n$ tends to infinity, using the second Dini's theorem. First, for any $n\geq 1$, $M_n(\nu,\cdot)$ is a non decreasing function because its derivative $c\mapsto \frac{1}{c}\chi_n(\nu,c)$ is non negative on $(0,\infty)$, as $\chi_n(\nu,c)$ is the variance of a real random variable, see Equation (\ref{equa: Definition Mn and Xn : proba}). To conclude, we have to prove that $(M_n(\nu,\cdot))_n$ converges pointwise to a continuous function. Fix $c>0$. Observe that $M_n(\nu,c)$ can be written as follows:
\begin{equation}\label{equa: Expression of the magnetization with Z and diffcZ}
M_n(\nu,c)=c\partial_c F_n(\nu,c) = \frac{c}{n}\partial_c \log(\kZ_n(\nu,c)) = \frac{c\partial_c\kZ_n(\nu,c)}{n\kZ_n(\nu,c)},
\end{equation} 
Thus, one can obtain its convergence as $n$ tends to infinity by studying the asymptotic behavior of $\partial_c\kZ_n(\nu,c)$. To proceed, we express the power series $\partial_c\kZ(\nu,c,cz)$ as a rational function of the variables $\nu,c,z$ and $S$ by computing the derivative with respect to $c$ of Equations (\ref{equa: Definition of S}) and (\ref{equa: Parametrization of Z}) that respectively defines the formal power series $S$ and $\kZ$ (see the Maple companion \cite{Maple}). Then, following the proof of Theorem \ref{theo: Asymptotic behavior of the coefficients of Z}, we plug an asymptotic expansion of the power series $S(\nu,c,z)$, as $z\rightarrow\rho_{\nu,c}\hspace{0.03cm}^{-}$, in the parametrization of $\partial_c\kZ(\nu,c,z)$ to obtain the first terms of the asymptotic behavior of the power series $\partial_c\kZ(\nu,c,z)$ when $z\rightarrow {\mu_{\nu,c}}^-$. Then, by applying the transfer theorem (see \cite[Theorem VI.3, p.390]{FS}) we obtain, as $n\rightarrow\infty$:
\begin{equation*}
\left[z^n\right]\partial_c\kZ(\nu,c,z)\sim 
\begin{dcases}
 	\widetilde\gimel(\nu,1)\cdot \mu_{\mu,1}^{\hspace{0.3cm}-n}\hspace{0.05cm} n^{-4/3} & \text{ for $(c,\nu)=(1,\nu_\star)$,}	\\[0.2cm]
 	\widetilde\gimel(\nu,c)\cdot \mu_{\mu,c}^{\hspace{0.3cm}-n}\hspace{0.05cm} n^{-3/2} & \text{ otherwise.}
\end{dcases}
\end{equation*}
where $\widetilde\gimel(\nu,c)$ is a real number. One can observe that there exists $0<\varepsilon_\nu<1$ such that $\widetilde\gimel(\nu,\cdot)$ is a non-zero continuous function on $\left[1-\varepsilon_\nu,1+\varepsilon_\nu\right]$. It comes from the computation of the first and second derivative of $\widetilde\gimel(\nu,\cdot)$ at $c=1$, following the proof of Theorem \ref{theo: Asymptotic behavior of the coefficients of Z}. Thus, we derive from  Equation (\ref{equa: Expression of the magnetization with Z and diffcZ}) and from the asymptotic behavior of $\left[z^n\right]\partial_c\kZ(\nu,c,z)$ and $\left[z^n\right]\kZ(\nu,c,z)$ (see Theorem \ref{theo: Asymptotic behavior of the coefficients of Z}) that $M_n(\nu,c)$ converges as $n$ goes to infinity. Furthermore, its limit is a continuous function with respect to $c$, as it is proportional to the quotient of the non-zero and continuous functions $\widetilde\gimel(\nu,c)$ and $\gimel(\nu,c)$. We conclude using the second Dini's theorem. 
\end{proof}

We finally have all the tools we need to establish our critical exponents: $\delta=5$, $\beta=\frac{1}{2}$ and $\gamma=2$.

\begin{proof}[Proof of Theorem \ref{theo: Critical exponents - beta},\ref{theo: Critical exponents - delta}, and \ref{theo: Critical exponents - gamma}]
All computations performed in this proof are detailed in the Maple companion \cite{Maple}. First, we compute the asymptotic behavior of $M(\nu_\star,c)$ when $c\rightarrow 1^+$. To do so, we apply the same methodology as in the proof of Lemma \ref{lemm : study of parameter S when c=1}. We calculate a cancelling polynomial for $M(\nu_\star,c)$, then we compute the Puiseux expansion of its roots when $c\rightarrow 1$, and finally we derive from them the announced asymptotic behavior.
Recall, that the discriminant $\mathcal{D}_{\nu,c}(z)$ of the polynomial equation verified by $S(\nu,c,z)$ obtained from Equation (\ref{equa: Parametrization of Z}) cancels its radius of convergence $\rho_{\nu,c}$. Hence, the polynomial $\widetilde{\mathcal{D}}_{\nu,c}(z)\coloneq c^{16}\cdot\mathcal{D}_{\nu,c}(\frac{z}{c})\in\mathbb{Q}[\nu,c][z]$  verifies that $\widetilde{\mathcal{D}}_{\nu,c}(\mu_{\nu,c})=0$. Thus, differentiating the latter and using Equation (\ref{equa: Expresion of the free energy in function of the radius of convergence of the power series I}), one can express:
$$M(\nu,c)=\eval{\frac{c\partial_c\widetilde{\mathcal{D}}_{\nu,c}(z)}{z\partial_z\widetilde{\mathcal{D}}_{\nu,c}(z)}}_{z=\mu_{\nu,c}}.$$
From this expression, it follows that the polynomial $\mathrm{Pol}^1_{\nu,c}(Y)\in\mathbb{Q}\left[ \nu,c\right][Y]$ defined below, is of degree $8$ and cancels at $Y=M(\nu,c)$:
$$\mathrm{Pol}^1_{\nu,c}(Y)\coloneq\mathrm{Resultant}_X(Y\cdot X\partial_z\widetilde{ \mathcal{D}}_{\nu,c}(X)-c\partial_c\widetilde{\mathcal{D}}_{\nu,c}(X)   ,  \widetilde{\mathcal{D}}_{\nu,c}(X) ).$$

Fix $\nu=\nu_\star$. We compute the first term of the Puiseux expansion of its roots at $c=1$. We have to distinguish which one corresponds to  $M(\nu_\star,c)$. Observe that $c\mapsto M_n(\nu_\star,c)$ is a non decreasing function because its derivative $c\mapsto \frac{1}{c}\chi_n(\nu_\star,c)$ is non negative on $(0,\infty)$, as $\chi_n(\nu,c)$ is is the variance of a real variable. Thus, by Lemma \ref{lemm: Concergence of Mn}, $c\mapsto M(\nu_\star,c)$ inherits the non  decreasing property. This is verified for only one root of $\mathrm{Pol}^1_{\nu_\star,c}$, so it corresponds to $M(\nu_\star,c)$. Its Puiseux expansion at  $c=1$ gives the announced asymptotic behavior.

\bigskip

Now, let us compute $M(\nu,1)$. The previous method works here, but discriminating the roots of the resulting cancelling polynomial is a bit tedious. Nevertheless, we can compute the spontaneous magnetization directly from the information we know about the radius of convergence of $S$. From Equation (\ref{equa: Expresion of the free energy in function of the radius of convergence of the power series I}) and from the definition of the  magnetization, one has:
$$M(\nu,c)=-\left(1+\frac{c\partial_c\rho_{\nu,c}}{\rho_{\nu,c}}\right).$$
In the proof of Lemma \ref{lemm : study of parameter S for c close to 1} we computed the first term of the Puiseux expansion of $\rho_{\nu,c}$ at $c=1$. Hence, by plugging it into this expression of $M(\nu,c)$, we study its asymptotic behavior near $c=1$. The constant coefficient equals $M(\nu,1)$, which is exactly the spontaneous magnetization. 

\bigskip

We end with the asymptotic behavior of $\chi(\nu,1)$ when $\nu\rightarrow \nu_\star\hspace{0.05cm}^{-}$.
As previously, from Equation (\ref{equa: Expresion of the free energy in function of the radius of convergence of the power series I}) and from the definition of the  susceptibility, one has:
$$\chi(\nu,c)= \left(\frac{c\partial_c\rho_{\nu,c}}{\rho_{\nu,c}}\right)^2 - \frac{c\partial_c\rho_{\nu,c}}{\rho_{\nu,c}} - \frac{c^2\partial_c^2\rho_{\nu,c}}{\rho_{\nu,c}}$$

We end the proof by plugging the Puiseux expansion of $\rho_{\nu,c}$ at $c=1$ computed in Lemma \ref{lemm : study of parameter S for c close to 1} into this expression. Recall from this lemma, that for  $0<\nu\leq\nu_\star$, the radius of convergence $\rho_{\nu,c}$ can be expressed as a power series in $(1-c)$, and so does $\chi(\nu,c)$. It converges at $c=1$, except for $\nu=\nu_\star$, and we obtain the expression of the susceptibility announced in the statement. For  $\nu>\nu_\star$, the radius of convergence $\rho_{\nu,c}$ can be expressed as a power series in $\sqrt{\vert 1-c\vert}$. By direct computation $\chi(\nu,c)$ can be expressed as a Puiseux series at $c=1$. Its first coefficient is of order $-\frac{1}{2}$, so that $\chi(\nu,c)$ tends to infinity when $c$ tends to $1$.
\end{proof}

\bibliographystyle{plainnat} % or abbrvnat, unsrtnat, etc.
\bibliography{IsingTetraMagn}  

\end{document}